\renewcommand{\epsilon}{\varepsilon}
\renewcommand{\phi}{\varphi}
\renewcommand{\kappa}{\varkappa}
\renewcommand{\tilde}{\widetilde}
\newcommand{\IE}{\mathbb{E}}
\newcommand{\I}{\mathbbm{1}}
\newcommand{\IN}{\mathbb{N}}
\newcommand{\IP}{\mathbb{P}}
\newcommand{\IR}{\mathbb{R}}
\newcommand{\IZ}{\mathbb{Z}}
\newcommand{\cal}{\mathcal}
\newtheorem{theorem}{Theorem}
\newtheorem{lemma}[theorem]{Lemma}
 \newcommand{\brJ}{\bar J}
 \newcommand{\brl}{\bar\ell}
 \newcommand{\cF}{\mathcal{F}}
 \newcommand{\htheta}{\hat\theta}
\newcommand{\tV}{\tilde V}
\begin{document}
\title[Excited random walks]{Scaling limits of recurrent excited
  random walks on integers}

\author{Dmitry Dolgopyat and Elena Kosygina}\thanks{\textit{2000
    Mathematics Subject Classification.}  Primary: 60K37, 60F17,
  60G50.}  \thanks{\textit{Key words:} excited random walk, cookie
  walk, branching process, random environment, perturbed Brownian
  motion. }

\begin{abstract}
  We describe scaling limits of recurrent excited random walks (ERWs)
  on $\IZ$ in i.i.d.\ cookie environments with a bounded number
  of cookies per site. We allow both positive and negative
  excitations. It is known that ERW is recurrent if and only if the
  expected total drift per site, $\delta$, belongs to the interval
  $[-1,1]$. We show that if $|\delta|<1$ then the diffusively scaled
  ERW under the averaged measure converges to a
  $(\delta,-\delta)$-perturbed Brownian motion. In the boundary case,
  $|\delta|=1$, the space scaling has to be adjusted by an extra
  logarithmic term, and the weak limit of ERW happens to be a constant
  multiple of the running maximum of the standard Brownian motion, a
  transient process.
\end{abstract}
\maketitle
\section{Introduction and main results}
Given an arbitrary positive integer $M$ let 
\begin{align*}
  \Omega_M:=\big\{((\omega_z(i))_{i\in\IN})_{z\in\IZ}
 \mid \, \omega_z(i)\in[0,1],\ &\text{for }
   i\in\{1,2,\dots,M\}\\
  \text{\ and}\  
  \omega_z(i)=1/2,\  &\text{for } i>M,\  z\in\IZ\big\}.
\end{align*}
An element of $\Omega_M$ is called a cookie environment. For each
$z\in\IZ$, the sequence $\{\omega_z(i)\}_{i\in\IN}$ can be thought of
as a stack of cookies at site $z$. The number $\omega_z(i)$  represents the 
transition probability from $z$ to $z+1$ of a nearest-neighbor random
walk upon the $i$-th visit to $z$. If $\omega_z(i)\ge 1/2$ (resp.
$\omega_z(i)<1/2$) the corresponding cookie is called non-negative
(resp.\ negative).

Let $\IP$ be a probability measure on $\Omega_M$, which satisfies the
following two conditions: 
\begin{itemize}
\item[(A1)]  Independence: the sequence 
$(\omega_z(\cdot))_{z\in\IZ}$ is i.i.d.\ under $\IP$;
\item [(A2)] Non-degeneracy: 
$\IE\left[\prod_{i=1}^M
  \omega_0(i)\right]>0\ \text{ and }\ \IE\left[\prod_{i=1}^M
  (1-\omega_0(i))\right]>0$.
\end{itemize}

For $x\in\IZ$ and $\omega\in \Omega_M$ consider an integer valued
process $X:=(X_j),\ j\ge 0$, on some probability
space $({\cal X},\mathcal{F},P_{x,\omega})$, which $P_{x,\omega}$-a.s.\
satisfies $P_{x,\omega}(X_0=x)=1$ and
\begin{equation*}
  P_{x,\omega}(X_{n+1}=X_n+1\,|\,{\cal
    F}_n)=1-P_{x,\omega}(X_{n+1}=X_n-1\,|\,{\cal F}_n)=\omega_{X_n}(L_{X_n}(n)),
\end{equation*}
where ${\cal F}_n\subset {\cal F}$, $n\ge 0$, is the natural
filtration of $X$ and $L_m(n):=\sum_{j=0}^n\I_{\{X_j=m\}}$ is the
number of visits to site $m$ by $X$ up to time $n$. Informally
speaking, upon each visit to a site the walker eats the topmost cookie
from the stack at that site and makes one step to the right or to the
left with probabilities prescribed by this cookie. The consumption of
a cookie $\omega_z(i)$ induces a drift of size
$2\omega_z(i)-1$
. Since $\omega_z(i)=1/2$ for all $i>M$, the walker will make unbiased
steps from $z$ starting from the $(M+1)$-th visit to $z$. Let $\delta$
be \textit{the expected total drift per site}, i.e.
\begin{equation}
  \label{D}
  \delta\ :=\ \IE\Bigg[\sum_{i\ge 1}(2\omega_0(i)-1)\Bigg]\ 
=\ \IE\left[\sum_{i=1}^M(2\omega_0(i)-1)\right].
\end{equation}
The parameter $\delta$ plays a key role in the classification of the
asymptotic behavior of the walk. For a fixed $\omega\in\Omega$ the
measure $P_{\omega,x}$ is called \textit{quenched}. The
\textit{averaged} measure $P_x$ is obtained by averaging over environments,
i.e.\ $P_x(\ \cdot\ ):=\IE\left(P_{x,\omega}(\ \cdot\ )\right)$.

There is an obvious symmetry between positive and negative cookies: if
the environment $(\omega_z)_{z \in \IZ}$ is replaced by
$(\omega^\prime_z)_{z \in \IZ} $ where $\omega^\prime_z (i)=1-
\omega_z (i)$, for all $i\in\IN,\ z\in\IZ$, then $X'$, the ERW
corresponding to the new environment, satisfies $
X'\overset{\mathrm{d}}{=} -X$, where $\overset{\mathrm{d}}{=}$
denotes the equality in distribution.  Thus, it is sufficient to
consider only non-negative $\delta$ (this, of course, allows
both negative and positive cookies), and we shall always assume this
to be the case.

ERW on $\IZ$ in a non-negative cookie environment and its natural
extension to $\IZ^d$ (when there is a direction in $\IR^d$ such that
the projection of a drift induced by every cookie on that direction is
non-negative) were considered previously by many authors (see, for
example, \cite{BW03}, \cite{Ze05}, \cite{Ze06}, \cite{BS08a},
\cite{BS08b}, \cite{MPV06} \cite{BR07}, \cite{Do11}, \cite{MPRV}, and
references therein).

Our model allows both positive and negative cookies but restricts
their number per site to $M$. This model was studied in \cite{KZ},
\cite{KM}, \cite{RR}, \cite{Pe}. It is known that the process is
recurrent (i.e.\ for $\IP$-a.e.\ $\omega$ it returns to the starting
point infinitely often) if and only if $\delta\le 1$ (\cite{KZ}). For
transient (i.e.\ not recurrent) ERW, there is a rich variety of limit
laws under $P_0$ (\cite{KM}).

In this paper we study scaling limits of recurrent ERW under $P_0$.
The functional limit theorem for recurrent ERW in stationary ergodic
non-negative cookie environments on strips $\IZ\times(\IZ/L\IZ)$,
$L\in\IN$, under the quenched measure was proven in \cite{Do11}. Our
results deal only with i.i.d.\ environments on $\IZ$ with bounded
number of cookies per site but remove the non-negativity assumption on
the cookies. We are also able to treat the boundary case $\delta=1$.
Extensions of these results and results of \cite{KM} to strips, or
$\IZ^d$ for $d>1$, or the ``boundary'' case for the model treated
in \cite{Do11} are still open problems.

To state our results we need to define the candidates for limiting
processes. Let $D([0,\infty))$ be the Skorokhod space of c\`adl\`ag
functions on $[0,\infty)$ and denote by $\overset{J_1}{\Rightarrow}$
the weak convergence in the standard ($J_1$) Skorokhod topology on
$D([0,\infty))$. Unless stated otherwise, all
processes start at the origin at time $0$. Let $B=(B(t)),\ t\ge 0$,
denote a standard Brownian motion and
$X_{\alpha,\beta}=(X_{\alpha,\beta}(t)),\ t\ge 0$, be an $(\alpha,
\beta)$-perturbed Brownian motion, i.e.\ the solution of the equation
\begin{equation}
  \label{abp}
  X_{\alpha,\beta}(t)=B(t)+\alpha\sup_{s\leq t}
X_{\alpha,\beta}(s)+\beta \inf_{s\leq t} X_{\alpha,\beta}(s), 
\end{equation}
For $(\alpha,\beta)\in(-\infty,1)\times(-\infty,1)$ the equation
(\ref{abp}) has a pathwise unique solution that is adapted to the
filtration of $B$ and is a.s.\ continuous (\cite{PW}, \cite{CD}).  Now
we can state the results of our paper.
\begin{theorem}[Non-boundary case]
\label{ThERWRecLim}
If $\delta\in[0,1)$ then \[\dfrac{X_{[n\cdot]}}{\sqrt
  n}\overset{J_1}{\Rightarrow} X_{\delta,-\delta}(\cdot)\ \text{as }n\to\infty. \]
\end{theorem}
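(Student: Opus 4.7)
The plan is to represent the recurrent ERW as a branching process with migration via a Ray--Knight-type decomposition, show that its rescaled profile converges to the local time profile of the $(\delta,-\delta)$-perturbed Brownian motion $X_{\delta,-\delta}$, and then invert the local time/hitting time correspondence to obtain the desired path convergence. Concretely, I would fix $n$, set $T_n:=\min\{j\ge 0:X_j=n\}$, and for $0\le k\le n-1$ define $V_k$ to be the number of jumps from $k+1$ to $k$ performed before time $T_n$. As observed in \cite{BS08a,KZ,KM}, traversed from $k=n-1$ downwards $(V_k)$ is a Markov chain: conditionally on $V_k=v$, the next value $V_{k-1}$ is the number of left-jumps from site $k$ accumulated before the walker performs $v+\I_{\{k\ne 0\}}$ right-jumps from $k$. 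Because at most $M$ cookies sit at any site, after $M$ visits the walker at $k$ performs fair steps, so $V_{k-1}\mid V_k=v$ decomposes as a sum of $v$ i.i.d.\ $\mathrm{Geom}(1/2)$ ``offspring'' variables plus a bounded integer-valued ``migration'' whose $\IP$-averaged mean converges to $-\delta$ and whose variance stays bounded.

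Next, I would rescale space and population by $n$, setting $Z^n(x):=V_{[n(1-x)]}/n$ for $x\in[0,1]$, and prove $Z^n\Rightarrow Z$ weakly, where $Z$ is the non-negative diffusion solving a squared-Bessel-type SDE with constant drift proportional to $\delta$ and diffusion coefficient $2\sqrt{Z}$, with the appropriate boundary behavior at $0$. With this in hand, I would invoke the Ray--Knight theorem for the perturbed Brownian motion (obtained from the theory developed in \cite{CD,PW}), which says that the edge local time profile of $X_{\delta,-\delta}$ at its first hitting time of level $1$ has exactly the law of $Z$. Varying the terminal level, and treating $k<0$ by the sign-flip symmetry $\delta\mapsto-\delta$ noted in the introduction, convergence of the local time profiles transfers to convergence of the rescaled hitting times $T_{[n\cdot]}/n^2$ to the hitting times of $X_{\delta,-\delta}$; inversion of this time change yields $X_{[n\cdot]}/\sqrt n\overset{J_1}{\Rightarrow}X_{\delta,-\delta}$.

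The principal obstacle is the scaling limit of $(Z^n)$ in the presence of \emph{negative} cookies. In the non-negative cookie case, the branching chain with migration is stochastically monotone, and once the $M$ cookies at a site are spent it couples to a plain critical Galton--Watson chain; this makes tightness and identification of the limit essentially routine. Negative cookies destroy this monotonicity: the migration can be negative, so $(V_k)$ may dip sharply towards $0$, where the diffusion approximation degenerates and the discrete cookie dynamics reassert themselves. Quantitative control of the proportion of time the rescaled chain $Z^n$ spends near $0$, uniform in $n$, is the crux; this should follow from $\delta<1$ (equivalently, from the admissibility of $(\delta,-\delta)$ for equation~(\ref{abp})), which furnishes the uniform excursion-length moment bounds for $(V_k)$ needed to close the martingale problem characterizing $Z$.
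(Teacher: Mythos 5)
Your proposal takes a genuinely different route from the paper. The paper avoids the Ray--Knight program entirely: it decomposes $X_n=B_n+C_n$ with $B_n$ the Doob martingale part and $C_n$ the accumulated conditional drift, proves (i) a quenched invariance principle for $B^{(n)}$, (ii) that $\sup_{k\le nt}|C_k-\delta R_k|/\sqrt n\to 0$ where $R_k$ is the range, and (iii) tightness of $X^{(n)}$; the limit is then identified through the continuous map $x\mapsto\sup_{s\le\cdot}x(s)-\inf_{s\le\cdot}x(s)$ and the defining equation (\ref{abp}). The branching process enters only through two auxiliary estimates (Lemmas~\ref{LmERWEatAll} and \ref{LmERWRange}) controlling unconsumed cookies in the range and the $\sqrt n$ scaling of hitting times. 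Your approach instead aims for a full scaling limit of the branching-process profile followed by a Ray--Knight theorem for the perturbed Brownian motion, which is closer in spirit to T\'oth's treatment of self-interacting walks cited as \cite{T}. That route would give strictly more information (the local-time profile) but is substantially harder, and two steps in your sketch are genuinely incomplete.

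First, the passage from hitting-time convergence to path convergence is not the simple inversion you assert. Inverting $T_{[n\cdot]}/n^2$ yields convergence of the running maximum $S_{[n\cdot]}/\sqrt n$ (and, by symmetry, of the running minimum), not of $X_{[n\cdot]}/\sqrt n$. For $\delta<1$ the walk is recurrent and the excursions of $X$ below its running maximum are of size $\Theta(\sqrt n)$ --- the same order as the process itself --- so they cannot be neglected; contrast this with the boundary case $\delta=1$, where the paper shows (Lemma~\ref{btrack}) that backtracking is $o(\sqrt n\log n)$ and hence only there does the running maximum determine the limit. Recovering the path from local-time profiles in the recurrent regime is precisely the hard part of the Ray--Knight program and requires an argument your outline does not supply. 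Second, as you acknowledge, negative cookies destroy the stochastic monotonicity that underlies the usual coupling proofs for the profile scaling limit, and the diffusion degenerates near zero; you correctly flag this as the crux but do not resolve it. You would also need to pin down the precise Ray--Knight theorem for the two-sided $(\delta,-\delta)$ perturbation, where the infimum perturbation couples the local times above and below the origin nontrivially, and you should recheck the diffusion coefficient: critical $\mathrm{Geom}(1/2)$ offspring has variance $2$, giving $\sqrt{2Z}\,dW$ rather than $2\sqrt Z\,dW$. None of this is necessarily fatal, but these are exactly the difficulties that the paper's martingale decomposition is designed to sidestep.
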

We note that there are other known random walk models which after
rescaling converge to a perturbed Brownian motion (see, e.g., \cite{Da,
  T}).
\begin{theorem}[Boundary case]
\label{ThERWCrit}
Let $\delta=1$ and $B^*(t):=\max_{s\le t}B(s)$, $t\ge 0$. Then there exists a
constant $D>0$ such that
\[\frac{X_{[n\cdot]}}{D \sqrt{n}\log n}\overset{J_1}{\Rightarrow} B^*(\cdot)\
\text{as }n\to\infty.\]
\end{theorem}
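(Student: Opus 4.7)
The strategy is to combine the Ray-Knight reduction for ERW with an analysis of the associated branching process in the critical regime $\delta=1$. Fix $n$, let $T_n=\inf\{k:X_k=n\}$, and for $0\le k<n$ let $U_k^{(n)}$ denote the number of jumps from $k+1$ down to $k$ performed by $X$ before time $T_n$. As in \cite{KZ}, under $P_0$ the reversed sequence $(U_{n-1}^{(n)},U_{n-2}^{(n)},\dots,U_0^{(n)})$ is a time-homogeneous Markov chain in which each transition $U_k^{(n)}\mapsto U_{k-1}^{(n)}$ is built from a sum of i.i.d.\ geometric random variables (the ``branching'' part, produced by the fair coin-flips after the $M$ cookies at site $k$ are exhausted) perturbed by a bounded cookie-dependent ``migration'' term whose mean is proportional to $\delta-1$. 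At $\delta=1$ the migration has mean zero, and the chain is \emph{critical} in the sense of branching processes with migration.

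The core technical step is a functional scaling limit for this critical chain. A Lyapunov analysis (e.g.\ with $\phi(u)=\sqrt{u}$) together with a martingale decomposition should show that $U_{n-[nt]}^{(n)}/(n/\log n)$ converges in $J_1$ to a non-trivial limit process on $[0,1]$, and that the probability of the chain surviving $[nt]$ steps decays like $1/\log n$. Summing via the identity $T_n=2\sum_{k=0}^{n-1}U_k^{(n)}+n$ then yields $T_n\asymp n^2/\log n$; inverting this functional statement gives that the running maximum $M_n^*:=\max_{j\le n}X_j$, rescaled by $\sqrt{n}\log n$, converges in $J_1$ to $D\cdot B^*(t)$ for an explicit constant $D$ read off from the asymptotic variance of the chain.

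To upgrade from convergence of the maximum to convergence of $X$ itself one controls the oscillations of $X$ below its running maximum. Because $\delta=1$ still lies in the recurrent regime, excursions between successive records can be analyzed by the same Ray-Knight machinery, but the relevant chain now carries a non-zero negative migration and decays exponentially; this forces $\max_{j\le n}(M_j^*-X_j)=o(\sqrt{n}\log n)$. Combined with the convergence of the maximum, and with the strong Markov property applied at the hitting times of successive levels, this yields the claim $X_{[nt]}/(D\sqrt{n}\log n)\overset{J_1}{\Rightarrow} B^*(t)$.

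The main obstacle will be the quantitative analysis of the critical Markov chain in the second step: pinning down the $1/\log n$ survival asymptotics and the $(n/\log n)$ space scaling demands a carefully chosen Lyapunov function together with a sharp martingale argument, since the generating-function techniques that work for pure critical Galton--Watson processes do not apply in the presence of the cookie-induced migration. The analogous analysis in the transient regime was the technical heart of \cite{KM}, and the boundary case treated here should require a similar but more delicate version of those arguments.
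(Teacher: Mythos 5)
Your overall strategy---reduce to the downcrossing (branching-with-migration) chain, control hitting times, and then control the backtracking below the running maximum---is the same skeleton as the paper's. However, the technical core of your proposal misidentifies the structure of the critical regime, and the numbers you quote are inconsistent with the theorem you are trying to prove. You claim the chain $U^{(n)}_{n-[nt]}$, rescaled by $n/\log n$, converges in $J_1$, that the survival probability over $[nt]$ steps decays like $1/\log n$, and that $T_n\asymp n^2/\log n$. All three are off. By the Kosygina--Mountford asymptotics (Theorem 3 here, $\delta=1$), one lifetime $\sigma$ of the chain satisfies $P_0^V(\sigma>n)\sim C_1/n$, not $1/\log n$; correspondingly, the chain returns to $0$ roughly $n/\log n$ times over $n$ generations, so the trajectory itself, rescaled in any reasonable way, cannot converge in $J_1$---it is the occupation sums, not the paths, that have a limit. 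Moreover $\Sigma$ (total progeny over one lifetime) has a tail $P(\Sigma>s)\sim C_2/\sqrt s$, so the sum of the $\sim n/\log n$ independent $\Sigma_j$'s is governed by a $1/2$-stable subordinator $H$, giving $T_n\approx 2a\bigl(bn/\log n\bigr)^2 H(\cdot)$, i.e.\ $T_n\asymp n^2/\log^2 n$, which is the only scaling compatible with $X_{[n\cdot]}/(\sqrt n\log n)$. A diffusion-type Lyapunov/martingale argument with $\phi(u)=\sqrt u$ targets a Gaussian or diffusive limit and will not produce the heavy-tailed, jump-dominated subordinator that actually appears.

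Two further remarks. First, the paper does not re-derive the critical tail asymptotics for the branching process with migration; it imports them wholesale from \cite{KM} (stated as Theorem~\ref{LmERWLongExc}) and then applies the classical stable functional limit theorem to the i.i.d.\ pieces $(\Sigma_j)$, together with the lifetime count asymptotics, to get convergence of the finite-dimensional distributions of $T^{(n)}$ to a multiple of $H$ and hence, by inversion and monotonicity (Aldous's criterion), convergence of $S^{(n)}$ to $DB^*$. If you intend to rebuild the $C_1/n$ and $C_2/\sqrt s$ tails from scratch you should say so, and the Lyapunov route you sketch would need substantial modification. Second, your backtracking estimate invokes a separate chain with ``non-zero negative migration'' that ``decays exponentially''; the paper instead controls the backtracking directly with the \emph{same} critical chain, via the observation that the maximal backtrack distance is bounded by the longest lifetime $\max_i(\sigma_i-\sigma_{i-1})$ among the $O(\sqrt n)$ lifetimes needed to cover levels up to $K\sqrt n\log n$, and then uses $P(\sigma>n)\sim C_1/n$ with a union bound. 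Your heuristic that backtracking should be small because the reversed walk is sub-critical is plausible, but it is not obviously quantitative enough to deliver $o(\sqrt n\log n)$ without the heavy-tail estimate on $\sigma$, and it is not the mechanism used in the paper.
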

Observe that for $\delta=1$ the limiting process is transient while
the original process is recurrent. To prove Theorem~\ref{ThERWCrit} we
consider the process $S_j:=\max_{\, 0\le i\le j}X_i$,
$j\ge 0$, and show that after rescaling it converges to the
running maximum of Brownian motion.  The stated result then comes from the
fact that with an overwhelming probability the maximum amount of
``backtracking'' of $X_j$ from $S_j$ for $j\le [Tn]$ is of order
$\sqrt{n}$, which is negligible on the scale $\sqrt{n}\log n$ (see
Lemma~\ref{btrack}).

\section{Notation  and preliminaries}

Assume that $\delta\ge 0$ and $X_0=0$. Let $T_x=\inf\{j\ge 0:\,X_j=x\}$ be the
first hitting time of $x\in \mathbb{Z}$. Set
\[S_n=\max_{k\leq n}
X_k,\quad I_n=\min_{k\leq n}
X_k,\quad R_n=S_n-I_n+1, \quad n\ge 0.\]
 At first, we recall the connection with branching processes exploited
in \cite{BS08a}, \cite{BS08b}, \cite{KZ}, and \cite{KM}.

For $n\in\IN$ and $0\le k\le n$ define 
\[D_{n,k}=\sum_{j=0}^{T_n-1}\I_{\{X_j=k,\ X_{j+1}=k-1\}},\]
the number of jumps from $k$ to $k-1$ before time $T_n$. Then
\begin{equation}\label{Tn}
T_n=n+2\sum_{k\le n}D_{n,k}=n+2\sum_{0\le k\le n}D_{n,k}+2\sum_{k<0}D_{n,k}.
\end{equation} 
Consider the ``backward'' process
$\left(D_{n,n},D_{n,n-1}\dots,D_{n,0}\right)$. Obviously, $D_{n,n}=0$
for every $n\in\IN$. Moreover, given
$D_{n,n},D_{n,n-1},\dots,D_{n,k+1}$, we can write 
\begin{align*}
 D_{n,k}=&\sum_{j=1}^{D_{n,k+1}+1}(\#\ \text{of jumps from $k$ to 
$k-1$ between the $(j-1)$-th}\\&\text{
   and $j$-th jump from $k$ to $k+1$ before time $T_n$}),\ k=0,1,\dots,n-1.
\end{align*}
Here we used the observation that the number of jumps from $k$ to
$k+1$ before time $T_n$ is equal to $D_{n,k+1}+1$ for all $0\le k \le
n-1$. It follows from the definition that
$\left(D_{n,n},D_{n,n-1}\dots,D_{n,0}\right)$ is a Markov
process. Moreover, it can be recast as a branching process with
migration (see \cite{KZ}, Section 3, as well as \cite{KM}, Section
2). Let $V:=(V_k),\ k\ge 0$, be the process such that $V_0=0$
and \[(V_0,V_1,\dots,V_n)\overset{\mathrm{d}}{=}
\left(D_{n,n},D_{n,n-1}\dots,D_{n,0}\right)\quad \text{ for all
  $n\in\IN$}.\] Denote by $\sigma\in[1,\infty]$ and
$\Sigma\in[0,\infty]$ respectively the lifetime and the total progeny
over the lifetime of $V$, i.e.\  $
  \sigma=\inf\{k>0\,:\, V_k=0\},
\  \Sigma=\sum_{k=0}^{\sigma-1}V_k$.
The probability measure that corresponds to $V$ will be denoted by
$P_0^V$. The following result will be used several times throughout
the paper.
\begin{theorem}[\cite{KM}, Theorems 2.1 and 2.2]
\label{LmERWLongExc} 
Let $\delta>0$. Then
  \begin{align}
    &\lim_{n\to\infty}n^\delta P_0^V(\sigma>n)=C_1\in(0,\infty);\label{lifetime}\\
    &\lim_{n\to\infty}n^\delta
    P_0^V\left(\Sigma>n^2\right)=C_2\in(0,\infty).\label{progeny}
  \end{align}
\end{theorem}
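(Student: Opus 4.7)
The strategy is to identify $V$ as a discrete analogue of a squared Bessel diffusion and to read off both tails from BESQ theory.

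First, I would write down the one-step transition of $V$ explicitly. Given $V_k=v$, one has $V_{k+1}=\sum_{j=1}^{v+1}\xi_j$, where $\xi_j$ counts the consecutive down-jumps between the $(j-1)$-th and $j$-th up-jumps at an independent new site. Only the first $M$ coin flips at that site are biased by cookies; all others are Bernoulli$(1/2)$. A short calculation using~\eqref{D} yields, for $v$ large,
\[\IE\,[V_{k+1}-V_k\mid V_k=v]=1-\delta+O(1/v),\qquad \mathrm{Var}(V_{k+1}\mid V_k=v)=2v+O(1).\]
Thus $V$ is a critical Galton--Watson chain with Geometric$(1/2)$ offspring (mean $1$, variance $2$) perturbed by a bounded migration term of asymptotic mean $1-\delta$.

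Second, under the diffusive scaling $Z^{(n)}_t:=V_{[nt]}/n$, the chain should converge to the solution of
\[dZ_t=(1-\delta)\,dt+\sqrt{2Z_t}\,dB_t,\]
which is a constant multiple of a squared Bessel process of dimension $d=2(1-\delta)$. For $\delta>0$ we have $d<2$, so $Z$ returns to $0$ in finite time, and the explicit BESQ transition density together with Brownian self-similarity yields
\[P_x(\tau_0>t)\sim c\,(x/t)^\delta,\qquad P_x\!\left(\int_0^{\tau_0}Z_s\,ds>u\right)\sim c'\,x^\delta\,u^{-\delta/2},\]
for starting points $x$ bounded away from $0$ and $t,u\to\infty$.

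Third, I would transfer these tail estimates back to $V$. Assumption~(A2) ensures $P_0^V(V_1\ge 1)>0$, and conditionally on $V_1=x_1$ of order $1$, the subsequent evolution of $V$ is a discrete approximation of BESQ started at $x_1$. Plugging $x\asymp 1$, $t=n$, and $u=n^2$ into the two tail formulas gives $P_0^V(\sigma>n)\asymp n^{-\delta}$ and $P_0^V(\Sigma>n^2)\asymp n^{-\delta}$; the latter uses $\Sigma\approx n^2\int_0^{\sigma/n}Z^{(n)}_s\,ds$ under diffusive scaling. A tightness / uniform integrability argument, combined with averaging over the entrance distribution of $V$ out of $0$, promotes ``$\asymp$'' to ``$\sim$'' with positive finite constants $C_1,C_2$.

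The main technical obstacle is the boundary layer $V_k=O(1)$: the diffusion coefficient $\sqrt{2V_k}$ degenerates there, yet this is precisely the regime in which cookies affect $V$ nontrivially. Pinning down the exact constants---showing for instance that the probability for an excursion of $V$ to reach a macroscopic level is asymptotic to a positive constant times $n^{-\delta}$---requires a careful excursion or regeneration analysis at the successive returns of $V$ to $0$, and this is where the bulk of the argument in~\cite{KM} must lie.
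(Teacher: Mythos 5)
The first thing to note is that this theorem is not proved in the present paper at all: it is imported verbatim from \cite{KM} (Theorems 2.1 and 2.2), so there is no internal proof here to compare your attempt against. The comparison has to be with what \cite{KM} actually does.

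Your BESQ heuristic is the right way to \emph{guess} the answer. The drift and variance asymptotics $\IE[V_{k+1}-V_k\mid V_k=v]\to 1-\delta$ and $\mathrm{Var}(V_{k+1}\mid V_k=v)\sim 2v$ are correct, the informal scaling limit $dZ=(1-\delta)\,dt+\sqrt{2Z}\,dB$ is indeed a squared Bessel process of dimension $d=2(1-\delta)$, for $d<2$ the hitting time of $0$ from $x$ has tail $\sim c(x/t)^{1-d/2}=c(x/t)^{\delta}$ matching \eqref{lifetime}, and the excursion area scaling matches \eqref{progeny}. So the exponents do come out for free from this picture.

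The gap is genuine, though, and it is exactly where you flag it. Weak convergence of $V_{[n\cdot]}/n$ to $Z$ on compact time intervals does \emph{not} transfer polynomial tail asymptotics with constants: the events $\{\sigma>n\}$ and $\{\Sigma>n^2\}$ have vanishing probabilities, depend on the whole trajectory including the near-boundary region where the cookies dominate and the diffusion coefficient degenerates, and no amount of soft ``tightness / uniform integrability'' converts process-level weak convergence into $\lim_{n} n^\delta P_0^V(\sigma>n)\in(0,\infty)$. To close this you would need either a quantitative invariance principle uniform in the starting point and down to the boundary, or---as \cite{KM}, building on \cite{BS08a}, \cite{BS08b}, in fact does---a direct Markov-chain argument on $V$ itself: Lyapunov-- / Lamperti--type estimates for a suitable transform of $V$, the renewal structure of $V$ at its returns to $0$, and careful control of the entrance law out of the cookie region $\{0,\dots,M-1\}$, with no detour through the continuous limit. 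Your closing paragraph correctly locates the difficulty, but the sketch stops precisely where the cited proof begins to work.
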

We shall need to consider $V$ over many lifetimes. Let $\sigma_0=0$,
$\Sigma_0=0$,
\begin{equation}
  \label{sigmas}
  \sigma_i=\inf\{k>\sigma_{i-1}:\,V_k=0\},\quad
  \Sigma_i=\sum_{k=\sigma_{i-1}}^{\sigma_i-1}V_k,\quad i\in\IN.
\end{equation}
Then $(\sigma_i-\sigma_{i-1},\Sigma_i)_{i\in\IN}$ are i.i.d.\ under
$P^V_0$,
$(\sigma_i-\sigma_{i-1},\Sigma_i)\overset{\mathrm{d}}{=}(\sigma,\Sigma)$,
$i\in\IN$.

\section{Non-boundary case: two useful lemmas}
Let $\delta\in[0,1)$. First of all, we show that by time $n$ the
walker consumes almost all the drift between $I_n$ and $S_n$.
\begin{lemma}
\label{LmERWEatAll}
Assume that $\delta\in[0,1)$. Given $\gamma_1>\delta$, there exist
$\gamma_2>0$ and $\theta\in(0,1)$ such that for
all $1\le \ell\le n$ 
\begin{align}
\label{EatRight}
&P_0\left(\sum_{m=n-\ell }^{n-1}\I_{\{L_m(T_n)<M\}}> \ell ^{\gamma_1}\right)\le
\theta^{\ell ^{\gamma_2}}\quad\text{and}\\
\label{EatLeft}
&P_0\left(\sum_{m=-(n-1)}^{-(n-\ell )}\I_{\{L_m(T_{-n})<M\}}> 
\ell ^{\gamma_1}\right)\le
\theta^{\ell ^{\gamma_2}}.
\end{align}
\end{lemma}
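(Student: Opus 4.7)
The flip symmetry $\omega_z(i)\mapsto 1-\omega_z(i)$ (under which $X\mapsto -X$ and the drift becomes $-\delta$) interchanges \eqref{EatRight} and \eqref{EatLeft}, so it suffices to prove \eqref{EatRight} for any $\delta\in(-1,1)$. I will describe the argument for $\delta\in(0,1)$, where Theorem~\ref{LmERWLongExc} applies directly; the case $\delta\le 0$ is easier because the branching process $V$ has lighter excursion tails (and, for $\delta<0$, is transient), so small values become rarer and the same scheme goes through. The first step is the standard translation to the branching process $V$. For $1\le m\le n-1$, visits to $m$ decompose as (jumps $m-1\to m$) + (jumps $m+1\to m$), which by the observation in the excerpt equals $(D_{n,m}+1)+D_{n,m+1}$. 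Combining with $(D_{n,n},\dots,D_{n,0})\overset{\mathrm{d}}{=}(V_0,\dots,V_n)$ and the fact that $V_{j-1},V_j\ge M-1$ implies $V_{j-1}+V_j\ge M-1$,
\[
\sum_{m=n-\ell}^{n-1}\I_{L_m(T_n)<M}\;\overset{\mathrm{d}}{=}\;\sum_{j=1}^\ell\I_{V_{j-1}+V_j<M-1}\;\le\;2\Psi_\ell,\qquad \Psi_\ell:=\sum_{j=0}^\ell\I_{V_j\le M-2}.
\]
Hence it suffices to bound $P_0^V(\Psi_\ell>\ell^{\gamma_1}/2)$.

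Next, decompose $V$ into excursions using the zeros $\sigma_i$ from \eqref{sigmas}. Set $\tau_i=\sigma_i-\sigma_{i-1}$, $\xi_i=\sum_{k=\sigma_{i-1}}^{\sigma_i-1}\I_{V_k\le M-2}$, and $N_\ell=\min\{i:\sigma_i>\ell\}$. By strong Markov, $(\tau_i,\xi_i)_{i\ge 1}$ are i.i.d.\ under $P_0^V$, and $\Psi_\ell\le\sum_{i=1}^{N_\ell}\xi_i$. Fix $\gamma\in(\delta,\gamma_1)$. From \eqref{lifetime}, $P_0^V(\tau>\ell)\ge c\ell^{-\delta}$ for large $\ell$, so
\[
P_0^V\bigl(N_\ell>\lceil\ell^\gamma\rceil\bigr)\;\le\;\bigl(1-c\ell^{-\delta}\bigr)^{\lceil\ell^\gamma\rceil}\;\le\;\exp\bigl(-c'\ell^{\gamma-\delta}\bigr).
\]
Assuming an exponential tail $E_0^V[e^{\eta\xi_1}]<\infty$ for some $\eta>0$ (addressed below), Chernoff's inequality applied to the sum of $k:=\lceil\ell^\gamma\rceil$ i.i.d.\ copies gives
\[
P_0^V\Bigl(\sum_{i=1}^{k}\xi_i>\ell^{\gamma_1}/2\Bigr)\;\le\;\exp\bigl(k\log E_0^V[e^{\eta\xi_1}]-\tfrac{\eta}{2}\ell^{\gamma_1}\bigr)\;\le\;\exp(-c\ell^{\gamma_1}),
\]
since $\ell^\gamma\ll\ell^{\gamma_1}$. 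Since $\gamma-\delta<\gamma_1$, combining the two displays yields \eqref{EatRight} with $\gamma_2:=\gamma-\delta$ and some $\theta\in(0,1)$.

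The one genuinely hard step is the exponential tail of $\xi_1$. The heuristic is provided by Theorem~\ref{LmERWLongExc}: the matching polynomial rates $P_0^V(\sigma>t)\sim C_1 t^{-\delta}$ and $P_0^V(\Sigma>t^2)\sim C_2 t^{-\delta}$ indicate that within a long excursion of $V$ the typical height scales linearly in elapsed time, so $V$ spends only $O(1)$ steps at heights $\le M-2$. To make this rigorous I would use the branching-process-with-migration representation of $V$ from Section~2 of \cite{KM}: because the offspring and migration distributions out of states in $\{0,\ldots,M-2\}$ are supported on bounded sets and non-degenerate (thanks to (A2) and $M<\infty$), there exists $p_0>0$ such that (a)~from any state in $\{0,\ldots,M-2\}$ the process, within a uniformly bounded number of steps, either hits $0$ (ending the excursion) or jumps to a value $>M-2$ with probability at least $p_0$; and (b)~from any state $>M-2$, the excursion terminates before the next re-entry into $\{0,\ldots,M-2\}$ with probability at least $p_0$. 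Iterating (a) and (b) dominates $\xi_1$ stochastically by a geometric sum of uniformly bounded blocks, which has the required exponential tail. This geometric-trapping argument is the crux of the proof; everything else is essentially bookkeeping around \eqref{lifetime}.
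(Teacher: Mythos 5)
Your overall scheme for $\delta\in(0,1)$ — translate to the branching process $V$, control the time $V$ spends below $M$ via the lifetime asymptotic \eqref{lifetime} — is the same engine the paper uses, but the decomposition is genuinely different. The paper bounds $L^V_k(n)$ separately for each $k\in\{0,\ldots,M-1\}$: for $k=0$ directly from \eqref{lifetime}, and for $k\ge 1$ by showing via (A2) that many visits to $k$ force many visits to $0$. You instead decompose by excursions: control the number $N_\ell$ of completed excursions in $[0,\ell]$ via \eqref{lifetime}, and control the low-time $\xi_i$ within each excursion by a moment-generating-function bound. Your translation step is in fact slightly sharper (it uses $L_m(T_n)=D_{n,m}+D_{n,m+1}+1$ rather than just $L_m(T_n)\ge D_{n,m}$), and the excursion decomposition is a clean alternative. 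However, two points need repair.

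First, the exponential-tail claim for $\xi_1$ is correct, but your sub-claim (b) is not: from a state $w>M-2$ the process can only terminate the excursion "before re-entering $\{0,\ldots,M-2\}$" by jumping to $0$ in a single step, and the probability of that is not bounded below uniformly in $w$ (it decays like $p^w$). Claim (b) is also unnecessary. The paper's observation does the job directly: by (A2) there is $\epsilon>0$ with $P^V_0(V_{j+1}=0\mid V_j=k)\ge\epsilon$ for every $k\in\{1,\ldots,M-2\}$. Hence at each visit to $\{1,\ldots,M-2\}$ inside an excursion the next step ends the excursion with probability at least $\epsilon$, so the number of such visits is stochastically dominated by a geometric$(\epsilon)$ variable, and $\xi_1\le 1+\mathrm{Geom}(\epsilon)$, giving the needed exponential tail. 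No blocks or claim (b) required.

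Second, and more seriously, the case $\delta\le 0$ (which you need both for $\delta=0$ in \eqref{EatRight} and, after the flip, for $\delta\in(-1,0]$ in \eqref{EatLeft}) is not "easier" in the way you assert and is left as a gap. Theorem~\ref{LmERWLongExc} only applies for $\delta>0$, so the lower bound $P^V_0(\sigma>\ell)\ge c\ell^{-\delta}$ that you use to control $N_\ell$ is unavailable; moreover for $\delta<0$ the backward branching process need not return to $0$ at all within $[0,\ell]$, so the i.i.d.\ excursion structure your Chernoff step relies on may not even be present. The paper handles $\delta\le 0$ by a monotone coupling: perturb the environment to push $\delta$ slightly positive, obtaining $\tilde V\le V$ coordinatewise, so that $\I_{\{V_j<M\}}\le\I_{\{\tilde V_j<M\}}$, and then apply the $\delta>0$ result to $\tilde V$. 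Some such coupling (or another argument replacing \eqref{lifetime}) is needed to close your proof for $\delta\le 0$.
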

\begin{proof}
  We shall start with \eqref{EatRight} and use the connection with
  branching processes. Since the event we are interested in depends
  only on the environment and the behavior of the walk on $\{n-\ell
  ,n-\ell +1,\dots\}$, we may assume without loss of generality that
  the process starts at $n-\ell $ and, thus, by translation invariance
  consider only the case $\ell =n$.

  Let $L^V_k(n)=\sum_{j=0}^n \I_{\{V_j=k\}}$. We have
  \begin{multline}
    P_0\left(\sum_{m=0}^{n-1}\I_{\{L_m(T_n)<M\}}> n
      ^{\gamma_1}\right)\le P_0\left(\sum_{m=0
      }^n\I_{\{D_{n,m}<M\}}> n^{\gamma_1}\right)= P^V_0\left(\sum_{m=0
      }^n\I_{\{V_m<M\}}> n^{\gamma_1}\right)\\\le M\max_{0\le
      k<M}P^V_0\left(\sum_{m=0}^n\I_{\{V_m=k\}}> \frac{n
        ^{\gamma_1}}{M}\right)\label{v}= M\max_{0\le k<M}P^V_0\left(L^V_k(n)>
      \frac{n ^{\gamma_1}}{M}\right).
  \end{multline}
  At first, consider the case $\delta\in (0,1)$. Let $k=0$. Then (see
  (\ref{lifetime}) and (\ref{sigmas})) for all sufficiently large $n$
  we get
  \[P^V_0\left(L^V_0(n)> \frac{n ^{\gamma_1}}{M}\right)\le
  \prod_{i=1}^{[n^{\gamma_1}/M]} P^V_0\left(\sigma_i-\sigma_{i-1}\le
    n\right)\le\left(1-\frac{C_1}{2n^\delta}\right)^{[n^{\gamma_1}/M]}.\]
  Since $\gamma_1>\delta$, this implies the desired estimate for
  $k=0$.

 Let $k\in\{1,2,\dots,M-1\}$. Then for any $\epsilon>0$
  \begin{multline*}
    P^V_0\left(L^V_k(n)> \frac{n
      ^{\gamma_1}}{M}\right)=\\P^V_0\left(L^V_k(n)> \frac{n
      ^{\gamma_1}}{M}, L^V_0(n)> \frac{\epsilon n
      ^{\gamma_1}}{2M}\right)+P^V_0\left(L^V_k(n)> \frac{n
      ^{\gamma_1}}{M}, L^V_0(n)\le \frac{\epsilon n
      ^{\gamma_1}}{2M}\right)
  \\\le P^V_0\left(L^V_0(n)> \frac{\epsilon n
      ^{\gamma_1}}{2M}\right)+P^V_0\left(L^V_0(n)\le \frac{\epsilon n
      ^{\gamma_1}}{2M}\,\Big|\,L^V_k(n)> \frac{n
      ^{\gamma_1}}{M}\right).
  \end{multline*}
  We only need to estimate the last term. Notice that by (A2) there is
  $\epsilon >0$ such that $P^V_0(V_{j+1}=0\,|\,V_j=k)\ge \epsilon$ for
  all $k\in\{1,2,\dots,M-1\}$ and $j\in\mathbb{N}$. Therefore, the
  last term is bounded above by the probability that in at least
  $[n^{\gamma_1}/M]$ independent Bernoulli trials with probability of
  success in each trial of at least $\epsilon$ there are at most
  $[\epsilon n^{\gamma_1}/(2M)]$ successes. This probability is
  bounded above by $\exp(-cn^{\gamma_1}/M)$ for some positive
  $c=c(\epsilon)$. This completes the proof of (\ref{EatRight}) for
  $\delta>0$.

  If $\delta=0$ we modify the environment by increasing slightly the
  drift (to the right) in the first cookie at each site.  Let $\tV$ be the
  branching process corresponding to the modified
  environment. There is a natural coupling between $V$ and $\tV$ such
  that $\tV_j\leq V_j$, $j\in\{0,1,\dots,n\}$.  Accordingly, 
\[\sum_{j=0}^n\I_{\{V_j< M\}}\leq \sum_{j=0}^n\I_{\{\tV_j< M\}},\] 
and (\ref{EatRight}) for $\delta=0$ follows from the
result for $\delta>0$ and the second line of (\ref{v}). 

Next after replacing $X$ by $-X$ proving \eqref{EatLeft} reduces to
proving \eqref{EatRight} for $\delta\leq 0$ and $\gamma_1>0$. As
above, the result for $\delta\leq 0$ can be deduced from the result
for $\delta\in(0,\gamma_1)$ by coupling of the corresponding branching
processes.
\end{proof}

Next we show that $\sqrt{n}$ is a correct scaling in Theorem \ref{ThERWRecLim}.
\begin{lemma}
\label{LmERWRange}
Assume that $\delta\in[0,1)$. There exists $\theta\in(0,1)$ such that
for all $L>0$, $\ell\in\IN\cup\{0\}$, and $n\in\IN$
\[P_0\left(T_{\ell+n}-T_\ell\leq \frac{n^2}{L}\right)\leq \theta^{\sqrt{L}} \quad
\text{and}\quad P_0\left(T_{-\ell-n}-T_{-\ell}\leq \frac{n^2}{L}\right)\leq \theta^{\sqrt{L}}. \]
\end{lemma}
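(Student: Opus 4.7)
I would prove the right-hand inequality by combining the branching-process lower bound of Section~2 with a block decomposition of $[\ell,\ell+n]$ into $\sim\sqrt L$ independent sub-crossings; the left-hand inequality follows from the $X\mapsto-X$ symmetry together with the coupling to a walk of slightly positive $\delta$ used at the end of the proof of Lemma~\ref{LmERWEatAll}.

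At time $T_\ell$ the walker visits $\ell$ for the first time, so the cookies at sites $\ge\ell$ are still untouched and i.i.d. Applying the strong Markov property and redoing the Section~2 derivation of the branching process using only these fresh cookies, the downjump counts at $\ell+1,\dots,\ell+n$ during $[T_\ell,T_{\ell+n}]$, indexed from right to left, have the joint distribution of $(V_0,\dots,V_{n-1})$ under $P_0^V$. Since each interior edge is upcrossed exactly one more time than it is downcrossed,
\[
T_{\ell+n}-T_\ell\;\ge\;n+2\sum_{j=0}^{n-1}V_j\qquad\text{in distribution.}
\]
Now fix $k=\lfloor c\sqrt L\rfloor$ with $c>0$ chosen so that $m:=\lfloor n/k\rfloor$ satisfies $n^2/L\le m^2$, and split $[\ell,\ell+n]$ into $k$ spatial blocks of length at least $m$. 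Applying the same reduction to each sub-crossing separately and using that the cookies in disjoint blocks are independent, one obtains independent $S^{(1)},\dots,S^{(k)}$, each distributed as $\sum_{j=0}^{m-1}V_j$ under $P_0^V$, with $T_{\ell+n}-T_\ell\ge n+2\sum_{i=1}^k S^{(i)}$. On $\{T_{\ell+n}-T_\ell\le n^2/L\}$ the sum $\sum_i S^{(i)}$ is at most $m^2/2$, so each $S^{(i)}\le m^2/2$, giving by independence
\[
P_0\bigl(T_{\ell+n}-T_\ell\le n^2/L\bigr)\;\le\;(1-p_m)^k,\qquad p_m:=P_0^V\!\Bigl(\textstyle\sum_{j=0}^{m-1}V_j>m^2/2\Bigr).
\]

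The remaining task is to show $\liminf_{m\to\infty}p_m>0$. Theorem~\ref{LmERWLongExc} gives $P_0^V(\sigma>n)\sim C_1 n^{-\delta}$ and $P_0^V(\Sigma>n^2)\sim C_2 n^{-\delta}$; for $\delta\in(0,1)$ the excursion totals $\Sigma_i$ therefore lie in the domain of attraction of a one-sided stable law of index $\delta/2$, while by the stable renewal theorem $V$ completes of order $m^\delta$ excursions by time $m$. A standard stable-type computation then shows that $m^{-2}\sum_{j=0}^{m-1}V_j$ is tight with a nondegenerate positive weak limit, which therefore places positive mass on $(1/2,\infty)$; for $\delta=0$ the same conclusion follows by coupling with a walk of slightly positive $\delta'$, as in the proof of Lemma~\ref{LmERWEatAll}. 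Once $p_m\ge c_0>0$ for all $m\ge m_0$, the last display yields $(1-c_0)^k\le\theta^{\sqrt L}$ for an appropriate $\theta\in(0,1)$; the residual regime $m<m_0$ (equivalently $L>(n/m_0)^2$) is handled trivially because $T_{\ell+n}-T_\ell\ge n$ makes the probability vanish as soon as $L>n$, and the finitely many remaining bounded-$n$ pairs are absorbed by enlarging $\theta$. The main obstacle is the uniform lower bound $p_m\ge c_0$, which genuinely requires a scaling-type statement for $V$ beyond the one-excursion tails of Theorem~\ref{LmERWLongExc}.
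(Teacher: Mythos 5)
Your block decomposition is a genuinely different route from the paper's. The paper first reduces to a \emph{single} copy of the branching process $V$ via $T_{\ell+n}-T_\ell\ge\sum_{k=\ell}^{n+\ell}D_{n+\ell,k}\overset{\mathrm{d}}{=}\sum_{j=0}^n V_j$, and then replaces independence with a submultiplicative inequality
\[
P^V_0\Bigl(\sum_{j=0}^{m+k}V_j\le n\Bigr)\le P^V_0\Bigl(\sum_{j=0}^k V_j\le n\Bigr)P^V_0\Bigl(\sum_{j=0}^m V_j\le n\Bigr),
\]
obtained from the Markov property and the stochastic monotonicity of $V$ in its initial state. Your version instead cuts $[\ell,\ell+n]$ into $k$ spatial blocks of width $m$ and uses that the block-wise downcrossing totals $S^{(i)}=\sum_{k=\ell+(i-1)m+1}^{\ell+im}D_{\ell+im,k}$ are \emph{genuinely independent}. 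That independence is correct, but it deserves a word of justification you currently leave implicit: the backward recursion producing $D_{\ell+im,k}$ from $D_{\ell+im,k+1}$ uses only the cookies and auxiliary coins at site $k$, so $S^{(i)}$ is measurable with respect to the cookies in the interior sites $\ell+(i-1)m+1,\dots,\ell+im-1$ of block $i$, which are disjoint across blocks. The independence route buys you a cleaner $(1-p_m)^k$; the paper's submultiplicativity is slightly more robust (it does not depend on the special cookie structure) and directly bounds the probability for the single process $\sum_{j\le n}V_j$ that is used again elsewhere. The remaining single-block bound $p_m\ge c_0$ is exactly what the paper proves in its estimate $P^V_0(\sum_{j\le Kn}V_j\le n^2)\le 1/2$, using the stable renewal behavior of $\sigma_m$ together with the tail of $\Sigma$, precisely as you sketch (for $\delta=0$ and for the leftward bound, the coupling to a slightly positive-$\delta$ environment matches the paper). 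One small mismatch: your choice of threshold $m^2/2$ in $p_m$ is not delivered directly by the paper's estimate, which gives positive mass above $m^2/K^2$ for a fixed large $K$; this is harmless provided you choose $k\approx\sqrt{L}/K$ (rather than a free $c\sqrt{L}$) so that the implication $\sum_i S^{(i)}\le n^2/(2L)\Rightarrow$ each $S^{(i)}\le m^2/K^2$ holds, but as written the constant does not close.
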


\begin{proof}
  We shall prove the first inequality for $\delta\in(0,1)$. The case
  $\delta=0$ and the second inequality are handled in exactly the same
  way as in the proof of Lemma~\ref{LmERWEatAll}.

  Since $T_{n+\ell}-T_\ell\ge \sum_{k=\ell}^{n+\ell}
  D_{n+\ell,k}\overset{\mathrm{d}}{=}\sum_{j=0}^n V_j$, it is
  enough to show that
  \[ P^V_0\left(\sum_{j=0}^n V_j\leq \frac{n^2}{L}\right)\leq
  \theta^{\sqrt{L}}.\] 
Notice that by the Markov property and the stochastic monotonicity of $V$ in
the initial number of particles
\begin{multline}
  P^V_0\left(\sum_{j=0}^{m+k}V_j\le n\right)\le
  P^V_0\left(\sum_{j=m+1}^{m+k}V_j\le n\,\Big|\,\sum_{j=0}^m V_j\le
    n\right) P^V_0\left(\sum_{j=0}^m V_j\le
    n\right)\\\le P^V_0\left(\sum_{j=0}^k V_j\le
    n\right)P^V_0\left(\sum_{j=0}^m V_j\le
    n\right).\label{split}
\end{multline}
Suppose that we can show that there exist $K,n_0\in\IN$
  such that for all $n\ge n_0$
\begin{equation}
\label{CorrectScale}
P^V_0\left(\sum_{j=0}^{Kn} V_j\leq n^2\right)\leq \frac{1}{2}.
\end{equation}
Then using (\ref{split}) and (\ref{CorrectScale}) we get that for all
$L>4K^2$ and $n\ge \sqrt{L}n_0$
\begin{multline*}
  P_0^V\left(\sum_{j=0}^n V_j\le \frac{n^2}{L}\right)\le
  \left(P^V_0\left(\sum_{j=0}^{[2Kn/\sqrt{L}]}V_j\le
      \frac{n^2}{L}\right)\right)^{[\sqrt{L}/(2K)]}\\ \le
  \left(P^V_0\left(\sum_{j=0}^{2K[n/\sqrt{L}]}V_j\le
      4\left[\frac{n}{\sqrt{L}}\right]^2\right)\right)^{[\sqrt{L}/(2K)]}
  \le\left(\left(\frac12\right) ^{1/(4K)}\right)^{\sqrt{L}},
\end{multline*}
and we are done.

To prove (\ref{CorrectScale}), we observe that due to (\ref{lifetime})
the sequence $\sigma_m/m^{1/\delta}$, $m\in\IN$, has a limiting
distribution (\cite{Du}, Theorem 3.7.2) and, thus, if $K$ is large
then $ P_0(\sigma_{[(\sqrt{K} n)^\delta]}> Kn)\leq 1/4$ for all large
enough $n$.  We conclude that there is an $n_0\in\IN$ such that for
all $n\ge n_0$
\begin{multline*}
  P^V_0\left(\sum_{j=0}^{Kn} V_j\leq n^2\right)\leq
  \frac14+P^V_0\left(\sum_{j=0}^{\sigma_{[(\sqrt{K} n)^\delta]}} V_j\leq n^2,
    \sigma_{[(\sqrt{K} n)^\delta]}\le
    Kn\right)\\\leq \frac14+P^V_0\left(\sum_{i=1}^{[(\sqrt{K}
      n)^\delta]}\Sigma_i\le n^2\right) \le
  \frac14+\prod_{i=1}^{[(\sqrt{K}
      n)^\delta]}P^V_0\left(\Sigma_i\le n^2\right)
\overset{\text{(\ref{progeny})}}{\le}
    \frac14+ \left(1-\frac{C_2}{2n^\delta}\right)^{[(\sqrt{K}
      n)^\delta]}.
\end{multline*}
This immediately gives (\ref{CorrectScale}) if $K$ is chosen
sufficiently large.
\end{proof}

\section{Non-boundary case: Proof of Theorem~\ref{ThERWRecLim}}
Let $\Delta_n=X_{n+1}-X_n$ and 
\begin{equation}
  \label{bcd}
  B_n=\sum_{k=0}^{n-1} 
  \left(\Delta_k-E_{0,\omega}(\Delta_k|\cF_k)\right), \quad
  C_n=\sum_{k=0}^{n-1} E_{0,\omega}(\Delta_k|\cF_k).
\end{equation}
Then $X_n=B_n+C_n$, where $(B_n),\ n\ge 0$ is a martingale. 
Define
\[X^{(n)}(t):=\frac{X_{[nt]}}{\sqrt n},\quad
B^{(n)}(t):=\frac{B_{[nt]}}{\sqrt n},\quad
C^{(n)}(t):=\frac{C_{[nt]}}{\sqrt n},\quad t\ge 0,\ n\in\IN.\]
Theorem~\ref{ThERWRecLim} is an easy consequence of the following
three lemmas, the first of which holds for the quenched and the last
two for the averaged measures.
\begin{lemma}
  \label{Bn} Let $B$ be a standard Brownian motion. Then
  $B^{(n)}\overset{J_1}{\Rightarrow} B$ as $n\to\infty$ for
  $\IP$-a.e.\ $\omega$.

\end{lemma}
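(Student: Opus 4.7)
The plan is to invoke the martingale functional central limit theorem, e.g., in the form of Helland 1982 or Durrett's Theorem~7.4.1. The martingale increments $\Delta_k-E_{0,\omega}(\Delta_k|\mathcal{F}_k)$ are uniformly bounded by $2$, so the conditional Lindeberg condition is automatic under any measure. The only substantive task is therefore to verify that the quenched quadratic variation
\[
\langle B\rangle_n \;=\; \sum_{k=0}^{n-1}\Bigl(1-(2\omega_{X_k}(L_{X_k}(k))-1)^2\Bigr)
\]
satisfies $\langle B\rangle_n/n\to 1$ in $P_{0,\omega}$-probability for $\IP$-a.e.\ $\omega$.

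The key observation is that $\omega_z(i)=1/2$ whenever $i>M$, so at every step $k$ with $L_{X_k}(k)>M$ the corresponding summand above equals $1$. Consequently
\[
0\;\le\;n-\langle B\rangle_n\;\le\;\sum_{k=0}^{n-1}\I_{\{L_{X_k}(k)\le M\}}\;\le\;M\cdot R_n,
\]
since each of the at most $R_n$ sites visited by time $n$ contributes at most $M$ ``non-trivial'' indices. It therefore suffices to show that $R_n/n\to 0$ in $P_{0,\omega}$-probability for $\IP$-a.e.\ $\omega$.

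This reduction is handled with the help of Lemma~\ref{LmERWRange}. Indeed, $S_n\ge \lambda\sqrt{n}$ forces $T_{\lceil\lambda\sqrt{n}\rceil}\le n$, and the lemma applied with $L\sim\lambda^2$ yields $P_0(S_n\ge \lambda\sqrt{n})\le \theta^{\lambda}$; the same bound holds for $-I_n$ by symmetry. Hence $P_0(R_n>\epsilon n/M)\le 2\theta^{\epsilon\sqrt{n}/M}$, a bound summable in $n$. By Fubini, $\sum_n P_{0,\omega}(R_n>\epsilon n/M)<\infty$ for $\IP$-a.e.\ $\omega$, and the first Borel--Cantelli lemma then delivers $R_n/n\to 0$ in quenched probability (in fact $P_{0,\omega}$-a.s.) for every such $\omega$.

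The only delicate step is this transfer from the averaged estimate of Lemma~\ref{LmERWRange} to its quenched analog; the rest of the argument is routine. Once $\langle B\rangle_n/n\to 1$ holds in the quenched sense for $\IP$-a.e.\ $\omega$, the martingale FCLT yields $B^{(n)}\overset{J_1}{\Rightarrow} B$ under $P_{0,\omega}$ and the lemma follows.
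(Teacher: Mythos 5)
Your proof is correct and follows essentially the same route as the paper: invoke the martingale FCLT, observe the Lindeberg condition is automatic because the increments are bounded, reduce the quadratic-variation condition to showing $R_n/n\to 0$ (using that only the first $M$ visits to a site give a non-trivial contribution, so $n-\langle B\rangle_n\le M R_n$), and then transfer the averaged bound of Lemma~\ref{LmERWRange} to a quenched one via Fubini and Borel--Cantelli. One small caution: the phrase ``the same bound holds for $-I_n$ by symmetry'' is loose, since for $\delta>0$ the model is not left-right symmetric; what actually gives the bound on $-I_n$ is the second inequality in Lemma~\ref{LmERWRange} itself, which the authors state precisely for that reason.
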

\begin{lemma}\label{Cn} For each $t\ge 0$ and
  $\epsilon>0$
  \[P_0\left(\sup_{k\leq nt} \frac{|C_k-\delta R_k|}{\sqrt
      n}>\epsilon\right)\to 0. \]
\end{lemma}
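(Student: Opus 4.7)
My plan is to identify $C_k$ with the total drift of cookies consumed in the first $k$ steps and compare it site by site with its mean $\delta R_k$ over the visited range. Using $E_{0,\omega}(\Delta_j\mid\cF_j)=2\omega_{X_j}(L_{X_j}(j))-1$ together with $\omega_m(i)=1/2$ for $i>M$, one obtains
\[
C_k=\sum_{m=I_k}^{S_k}\sum_{i=1}^{L_m(k-1)\wedge M}(2\omega_m(i)-1).
\]
Setting $Y_m:=\sum_{i=1}^M(2\omega_m(i)-1)$ (so $\IE Y_m=\delta$ and $|Y_m|\le M$), this yields the clean decomposition $C_k-\delta R_k = A_k - B_k$, where
\[
A_k:=\sum_{m=I_k}^{S_k}(Y_m-\delta),\qquad B_k:=\sum_{m=I_k}^{S_k}\I_{\{L_m(k-1)<M\}}\sum_{i=L_m(k-1)+1}^{M}(2\omega_m(i)-1).
\]
I will show that each of $\sup_{k\le nt}|A_k|$ and $\sup_{k\le nt}|B_k|$ is $o(\sqrt n)$ in $P_0$-probability.

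A preliminary step is a crude range bound: two applications of Lemma~\ref{LmERWRange}, one to $T_{[C\sqrt n/2]}$ and one to $T_{-[C\sqrt n/2]}$, show that for every $\eta>0$ one can pick $C=C(t,\eta)$ with $P_0(R_{[nt]}>C\sqrt n)<\eta$ for all large $n$. To handle $A_k$, I would introduce the two-sided i.i.d.\ random walk $\xi_a:=\sum_{m=0}^{a-1}(Y_m-\delta)$ (extended in the natural way for $a<0$); on the event $\{R_{[nt]}\le C\sqrt n\}$ one has $\sup_{k\le nt}|A_k|\le 2\sup_{|a|\le C\sqrt n+1}|\xi_a|$. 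Since $Y_m-\delta$ is bounded, centered, and i.i.d.\ under $\IP$, Kolmogorov's maximal inequality bounds the probability that the last sup exceeds $\epsilon\sqrt n$ by $O(n^{-1/2})$, which handles the $A_k$ term by standard reasoning.

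For $B_k$, note $|B_k|\le M\cdot N_k$ with $N_k:=\#\{m\in[I_k,S_k]:L_m(k-1)<M\}$, so it suffices to prove $\sup_{k\le nt}N_k=o(\sqrt n)$ in probability. The bridge between Lemma~\ref{LmERWEatAll} (a statement at the stopping times $T_{\pm n}$) and our uniform-in-$k$ need is the monotonicity $L_m(k-1)\ge L_m(T_{S_k})$ for $m\in[0,S_k-1]\setminus\{X_k\}$, together with its mirror image $L_m(k-1)\ge L_m(T_{I_k})$ for negative $m$. Fixing $\gamma_1\in(\delta,1)$ and applying Lemma~\ref{LmERWEatAll} with $n=\ell=s$ for each integer $s$ in some interval $[s_0(n),C\sqrt n]$ with $s_0(n)\to\infty$ slowly, then summing the stretched-exponential failure probabilities $\theta^{s^{\gamma_2}}$, shows that with high $P_0$-probability the bad count at $T_s$ is at most $s^{\gamma_1}\le (C\sqrt n)^{\gamma_1}$ simultaneously for all such $s$; for $s\le s_0(n)$ the bad count is trivially at most $s_0(n)$. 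The symmetric argument for negative $m$ via \eqref{EatLeft} is identical. Since $\gamma_1<1$, this gives $\sup_{k\le nt}N_k=O(n^{\gamma_1/2})=o(\sqrt n)$.

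I expect the main obstacle to be exactly this last step: Lemma~\ref{LmERWEatAll} is tailored to the hitting times $T_{\pm n}$, whereas Lemma~\ref{Cn} demands an estimate uniform in \emph{all} $k\le nt$. The monotonicity observation above, paired with a union bound over the $O(\sqrt n)$ relevant values of $S_k$ and $I_k$, closes this gap and is the only step requiring care beyond standard martingale and law-of-large-numbers type arguments. Combining the two bounds delivers the claim.
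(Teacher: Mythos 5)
Your proof is correct and follows essentially the same route as the paper: the same decomposition $C_k-\delta R_k=\sum_{m}(d_m-\delta)-(\text{leftover drift at unfinished sites})$, the range bound from Lemma~\ref{LmERWRange}, and Lemma~\ref{LmERWEatAll} combined with a union bound over positions to control the leftover-drift term uniformly in $k$. The only cosmetic differences are that you use Kolmogorov's maximal inequality where the paper invokes the strong law of large numbers for the first term, and you take a direct union bound over $s\in[s_0(n),C\sqrt n]$ where the paper partitions $[I_{[nt]},S_{[nt]}]$ into blocks of length $n^{1/4}$; both rely on the same stretched-exponential bound from Lemma~\ref{LmERWEatAll}.
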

\begin{lemma}
\label{LmERWTight}
The sequence $X^{(n)}$, $n\ge 1$, is tight in
$D([0,\infty))$. Moreover, if $X$ is a limit point of
this sequence and $P$ is the corresponding measure on $D([0,\infty))$
then $P(X\in C([0,\infty)))=1$.
\end{lemma}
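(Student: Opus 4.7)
The plan is to verify tightness of $X^{(n)}$ in $D([0,\infty))$ and then to deduce continuity of any limit from the fact that $|X_{n+1}-X_n|=1$, which forces the jumps of $X^{(n)}$ to be of size at most $1/\sqrt n$; hence they vanish uniformly on every compact time interval, and any subsequential $J_1$-limit must be a.s.\ continuous. Thus it is enough to establish compact containment and a modulus-of-continuity estimate for $X^{(n)}$ on each finite interval $[0,T]$.

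For tightness I exploit the martingale decomposition $X^{(n)}=B^{(n)}+C^{(n)}$ from (\ref{bcd}). Lemma~\ref{Bn} gives quenched $J_1$-convergence of $B^{(n)}$ to Brownian motion, which transfers to the averaged measure (via reverse Fatou applied to $P_\omega(\sup|B^{(n)}|>K)\le 1$) and in particular yields $C$-tightness of $B^{(n)}$. For $C^{(n)}$, Lemma~\ref{Cn} gives $\sup_{s\le t}|C^{(n)}(s)-\delta R^{(n)}(s)|\to 0$ in $P_0$-probability, where $R^{(n)}(t):=R_{[nt]}/\sqrt n=S^{(n)}(t)+|I|^{(n)}(t)+1/\sqrt n$ with $S^{(n)}(t):=S_{[nt]}/\sqrt n$ and $|I|^{(n)}(t):=-I_{[nt]}/\sqrt n$. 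Consequently, tightness of $X^{(n)}$ reduces to tightness of the two non-decreasing processes $S^{(n)}$ and $|I|^{(n)}$, and by the symmetry $X\mapsto -X$ (and the second inequality in Lemma~\ref{LmERWRange}) it suffices to handle $S^{(n)}$.

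For compact containment of $S^{(n)}$, I use the identity $\{S_{[nT]}\ge K\sqrt n\}=\{T_{[K\sqrt n]}\le[nT]\}$ together with Lemma~\ref{LmERWRange} applied at $\ell=0$ with $n'=[K\sqrt n]$ and $L\sim K^2/T$, giving $P_0(S^{(n)}(T)\ge K)\le\theta^{K/\sqrt T}$. For the modulus estimate, the key observation is that if $\ell_1:=S_{[nt]}$ and $\ell_2:=S_{[n(t+h)]}$ satisfy $\ell_2-\ell_1>\epsilon\sqrt n$, then $T_{\ell_1}\le[nt]$ and $T_{\ell_2}\le[n(t+h)]$ force $T_{\ell_2}-T_{\ell_1}\le nh+1$. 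Partitioning $[0,K\sqrt n]$ into blocks of length $\epsilon\sqrt n/2$, one finds an integer $k_*$ with $\ell_1\le k_*\epsilon\sqrt n/2<(k_*+1)\epsilon\sqrt n/2<\ell_2$, and hence $T_{(k_*+1)\epsilon\sqrt n/2}-T_{k_*\epsilon\sqrt n/2}\le nh+1$. A union bound over at most $2K/\epsilon$ values of $k$, combined with Lemma~\ref{LmERWRange} applied to $n'=[\epsilon\sqrt n/2]$ and $L\sim\epsilon^2/(4h)$, yields
\[P_0\!\left(\sup_{0\le t\le T}\big(S^{(n)}(t+h)-S^{(n)}(t)\big)>\epsilon\right)\le\theta^{K/\sqrt T}+\frac{2K}{\epsilon}\,\theta^{\epsilon/(2\sqrt h)},\]
which vanishes upon letting first $h\to 0$ and then $K\to\infty$. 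The main obstacle is precisely this modulus step: Lemma~\ref{LmERWRange} controls traversal times only between successive first-hitting locations reached from the origin, so a temporal chaining would require a strong-Markov argument with a partially consumed cookie environment; switching to a \emph{spatial} grid, as above, makes each increment a genuine first-hitting-time difference to which the lemma applies without modification.
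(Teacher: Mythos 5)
Your proposal is correct, and it takes a genuinely different route from the paper's proof. The paper proves a Kolmogorov-type dyadic modulus bound on $X^{(n)}$ directly: it fixes a dyadic time grid, splits $X=B+C$, controls $B$ via the Burkholder--Davis--Gundy inequality, and handles $C$ by decomposing the contribution into three spatial regions (below the running minimum, between the running minimum and maximum, above the running maximum); the outer regions are controlled by Lemma~\ref{LmERWRange} (the walk cannot reach new territory too fast), and the middle region by Lemma~\ref{LmERWEatAll} (few cookies remain, hence little drift). Your proof instead reuses Lemma~\ref{Cn}, which already packages the ``middle-region'' estimate from Lemma~\ref{LmERWEatAll} into the uniform approximation $C^{(n)}\approx\delta R^{(n)}$, and thereby reduces tightness of $C^{(n)}$ to tightness of the two monotone processes $S^{(n)}$ and $|I|^{(n)}$; those you control directly via a spatial grid of first-hitting increments and Lemma~\ref{LmERWRange}. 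This is not circular, since the paper's proof of Lemma~\ref{Cn} uses only Lemmas~\ref{LmERWEatAll} and~\ref{LmERWRange}. Your route is more modular and isolates the essential observation that $C$-tightness of the running extrema is the crux; the paper's route yields a quantitative (polynomial) modulus estimate and avoids depending on the order in which the auxiliary lemmas are proved. A few tiny corrections to your write-up: the spatial comparison should read $T_{\ell_2}-T_{\ell_1+1}\le nh+1$ (since $S_{[nt]}=\ell_1$ gives $T_{\ell_1+1}>[nt]$, not $T_{\ell_1}\ge[nt]$), and in the final limiting step one should choose $K$ first (to make $\theta^{K/\sqrt T}$ small) and then $h$, since $K$ is a free parameter whereas the modulus criterion requires the bound to vanish as $h\to0$ for fixed $\epsilon,\nu$. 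These do not affect the validity of the argument.
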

\begin{proof}[Proof of Theorem~\ref{ThERWRecLim} assuming
  Lemmas~\ref{Bn}--\ref{LmERWTight}]
  Since $X^{(n)}$, $n\ge 1$, is tight and $B^{(n)}\overset{J_1}{\Rightarrow} B$ as
  $n\to\infty$, the sequence $C^{(n)}$, $n\ge 1$, as the difference of two
  tight sequences is also tight.  We can assume
  by choosing a subsequence that $X^{(n)}\overset{J_1}{\Rightarrow} X$, where $X$ is
  continuous by Lemma~\ref{LmERWTight}. The mapping $x(\cdot)\mapsto
  r^x(\cdot):=\sup_{s\le \cdot}x(s)-\inf_{s\le \cdot}x(s)$ is
  continuous on $C([0,t])$. Therefore, by the continuous mapping theorem
\begin{equation}
  \label{rn}
  r^{X^{(n)}}(\cdot)=
\,\frac{R_{[n\cdot]}}{\sqrt{n}}\overset{J_1}{\Rightarrow} r^X(\cdot).
\end{equation}
The tightness of $C^{(n)}$, $n\ge 1$, (\ref{rn}), Lemma~\ref{Cn}, and
the ``convergence together'' result (\cite{Bil}, Theorem 3.1) imply
that $C^{(n)}\overset{J_1}{\Rightarrow} \delta r^X$ as $n\to\infty$.

Now we have a vector-valued sequence of processes $(X^{(n)}, B^{(n)},
C^{(n)})$, $n\ge 1$, that is tight. Therefore, along a subsequence,
this 3-dimensional process converges to $(X,B,\delta r^X)$. Since
$X^{(n)}=B^{(n)}+C^{(n)}$, we get that $X=B+\delta r^X$.
\end{proof}
We shall conclude this section with proofs of
Lemmas~~\ref{Bn}--\ref{LmERWTight}.
\begin{proof}[Proof of Lemma~\ref{Bn}]
  We shall use the functional limit theorem for martingale differences
  (\cite{Bil}, Theorem 18.2). Let
  $\xi_{nk}=n^{-1/2}(\Delta_{k-1}-E_{0,\omega}(\Delta_{k-1}|{\cal
    F}_{k-1}))$, $k,n\in\IN$. Due to rescaling and the
  fact that ERW moves in unit steps, it is obvious that the Lindeberg
  condition, \[\sum_{k\le
    nt}E_{0,\omega}[\xi_{nk}^2\I_{\{|\xi_{nk}|\ge \epsilon\}}]\to
  0\quad\text{ as $ n\to\infty$ for every $t\ge 0$ and $\epsilon>0$},\]
  is satisfied. Thus, we just have to show the convergence of the
  quadratic variation process, i.e.\ for
  $\IP$-a.e.\ $\omega$ for each $t\ge 0$
  \begin{equation}\label{quadvar}
    \sum_{k\le nt}E_{0,\omega}(\xi_{nk}^2|{\cal
      F}_{k-1})=\frac{[nt]}{n}-\frac{1}{n}\sum_{k\le
      nt}\left(E_{0,\omega}(\Delta_{k-1}|{\cal
        F}_{k-1})\right)^2\Rightarrow t
  \end{equation}
  as $n\to\infty$. Since \[0\le \frac{1}{n}\sum_{k\le
    nt}\left(E_{0,\omega}(\Delta_{k-1}|{\cal F}_{k-1})\right)^2\le
  \frac{M}{n}\,R_{[nt]},\] it is enough to prove that
  $P_{0,\omega}(R_{[nt]}>\epsilon n)\to 0$ a.s.\ for each
  $\epsilon>0$. We have \[P_{0,\omega}(R_{[nt]}>\epsilon n)\le
  P_{0,\omega}(T_{[\epsilon n/3]}\le nt)+P_{0,\omega}(T_{-[\epsilon
    n/3]}\le nt)=:f_{n,\epsilon}(\omega,t). \] By Fubini's theorem and
  Lemma~\ref{LmERWRange},
\begin{equation*}
  \IE \left(\sum_{n=1}^\infty f_{n,\epsilon}(\omega,t)\right) =
  \sum_{n=1}^\infty
  \IE f_{n,\epsilon}(\omega,t)=
  \sum_{n=1}^\infty \left(P_0\left(T_{[\epsilon n/3]}\le
      nt\right)+P_0\left(T_{-[\epsilon n/3]}\le
      nt\right)\right) <\infty.
\end{equation*}
This implies that $f_{n,\epsilon}(\omega,t)\to 0$ a.s.\ as $n\to\infty$
and completes the proof.
\end{proof}

\begin{proof}[Proof of Lemma~\ref{Cn}]
  Let $d_m=\sum_{i=1}^M(2\omega_m(i)-1)$ be the total drift stored at
  site $m$, $m\in\IZ$. Then \[C_k-\delta R_k=
  \sum_{m=I_k}^{S_k}(d_m-\delta)-\sum_{m=I_k}^{S_k}\I_{\{L_m(k)<M\}}
  \sum_{j=L_m(k)+1}^M(2\omega_m(j)-1). \] By Lemma~\ref{LmERWRange},
  given $\nu>0$, we can choose $K$ sufficiently large so that
  $P_0(R_{[nt]}>K\sqrt{n})<\nu/2$ for all $n\in\IN$.  We have
  \begin{multline}\label{Cn1}
    P_0\left(\sup_{k\leq nt} \frac{|C_k-\delta R_k|}{\sqrt
        n}>\epsilon\right)\le P_0\left(\max_{k\le
        nt}\frac{\Big|\sum_{m=I_k}^{S_k}(d_m-\delta)\Big|}{R_k}\,
      \frac{R_k}{\sqrt{n}}>\frac{\epsilon}{2},\frac{R_{[nt]}}{\sqrt{n}}\le
      K\right)\\+P_0\left(\frac{M}{\sqrt{n}}\sum_{m=I_{[nt]}}^{S_{[nt]}}
      \I_{\{L_m([nt])<M\}} >\frac{\epsilon}{2}, \frac{R_{[nt]}}{\sqrt{n}}\le
      K\right)+\frac{\nu}{2}.
  \end{multline}
  By the strong law of large numbers $\lim\limits_{(a+b)\to\infty}
  (a+b)^{-1}\sum_{m=-a}^{b}(d_m-\delta)=0$ ($\IP$-a.s.).  Therefore,
  for $\IP$-a.e.\ $\omega$ there is an $r(\omega)\in\IN$ such that
  $R_k^{-1}\Big|\sum_{m=I_k}^{S_k}(d_m-\delta)\Big|\le \epsilon/(2K)$
  whenever $R_k\ge r(\omega)$, and the first term in the right-hand
  side of (\ref{Cn1}) does not exceed
  \begin{equation*}
    P_0\left(\frac{2(M+1)r(\omega)}{\sqrt{n}}> 
      \frac{\epsilon}{2},\frac{R_{[nt]}}{\sqrt{n}}\le
      K\right)\le
    \IE\left(P_{0,\omega}\left(r(\omega)>\frac{\epsilon\sqrt{n}} 
        {4(M+1)}\right)\right)\to 0\ \ \text{as }n\to\infty.
  \end{equation*}
Thus, we only need to estimate the second term in the right-hand side
of (\ref{Cn1}).
  
Divide the interval $[I_{[nt]}, S_{[nt]}]$ into subintervals of
  length $n^{1/4}.$ By Lemma \ref{LmERWEatAll}, given
  $\gamma_1\in(\delta,1)$, with probability at least
  $1-\theta^{n^{\gamma_2/4}} K n^{1/4} $ all subintervals except the
  two extreme ones have at most $n^{\gamma_1/4}$ points which are
  visited less than $M$ times. Hence, for $n$ sufficiently large
  \begin{multline*}
    P_0\left(\frac{M}{\sqrt{n}}\sum_{m=I_{[nt]}}^{S_{[nt]}}
      \I_{\{L_m([nt])<M\}} >\frac{\epsilon}{2}, \frac{R_{[nt]}}{\sqrt{n}}\le
      K\right)\le \\P_0\left(\sum_{m=I_{[nt]}}^{S_{[nt]}}
      \I_{\{L_m([nt])<M\}} >n^{(1+\gamma_1)/4}+2n^{1/4}, \frac{R_{[nt]}}{\sqrt{n}}\le
      K\right)\le \theta^{n^{\gamma_2/4}} K n^{1/4},
  \end{multline*}
and the proof is complete.
\end{proof}

\begin{proof}[Proof of Lemma \ref{LmERWTight}]
  The idea of the proof is the following. If $X^{(n)}$ has large
  fluctuations then either $B^{(n)}$ has large fluctuations or
  $C^{(n)}$ has large fluctuations. $B^{(n)}$ is unlikely to have
  large fluctuations, since it converges to the Brownian motion. 
  By Lemma \ref{LmERWEatAll}, $C_n$ can have large fluctuations only if
  $S_n$ increases or $I_n$ decreases. However by Lemma
  \ref{LmERWRange} neither $I_n$ nor $S_n$ can change too quickly. Let us
  give the details.

  To prove both statements of Lemma~\ref{LmERWTight} it is enough to
  show that there exists $C_3, \alpha>0$ such that for all
  $\ell\in\IN$ and sufficiently large $n,\ n>2^\ell$,
  \begin{equation}
  \label{Holder}
    P_0(\cup_{k<2^\ell} \Omega_{n,k,\ell})\leq C_3 2^{-\alpha \ell},
  \end{equation}
where
\[\Omega_{n,k,\ell}= \left\{\left|X^{(n)}\left(\frac{k+1}{2^\ell}\right) 
    -X^{(n)}\left(\frac{k}{2^\ell}\right)\right|> 2^{-\ell/8}\right\}\]
(see e.g. the last paragraph in the proof of Lemma 1 in \cite{GS1}, Chapter III, Section~5).

Let
\begin{equation}
  \label{notat}
  m_1:=\left[\frac{k n}{2^\ell}\right],\ 
  m_2:=\left[\frac{(k+1) n}{2^\ell}\right],\
  J:=\frac{1}{4}\,n^{1/2}2^{-\ell/8}. 
\end{equation}
Then
\[\Omega_{n,k,\ell}=
\{|X_{m_2}-X_{m_1}|>4J\}\subset \Omega_{n,k,\ell}^{B}\cup\Omega_{n,k,
  \ell}^C,
\]
where 
\[\Omega_{n,k,\ell}^{B}=\{|B_\tau-B_{m_1}|>J,\tau\le m_2\},\quad
\Omega_{n,k, \ell}^C=\{|C_\tau-C_{m_1}|>3J,\tau\le m_2\}, \]  
$\tau:=\inf\{m>m_1\,:\,|X_m-X_{m_1}|>4J\}$ and $B_n$ and $C_n$ are defined in
\eqref{bcd}.

Since
$(B_{j+m_1}-B_{m_1})$, $j\ge 0$, is a martingale, whose quadratic
variation grows at most linearly, the maximal inequality and
Burkholder-Davis-Gundy inequality (\cite{HH}, Theorem 2.11 with $p=4$)
imply that
\[P_{0,\omega}(\Omega_{n,k,\ell}^{B})\leq
  P_{0,\omega}\left(\max_{m_1\le j\le m_2} |B_j-B_{m_1}|>J\right)\le
\frac{C(m_2-m_1)^2} {J^4}\le C'2^{-3\ell/2} .\] Hence,
$P_0\left(\cup_{k<2^\ell} \Omega_{n,k,\ell}^{B}\right)\leq
C'2^{-\ell/2}$ . 

To control $P_0(\Omega_{n,k, \ell}^C)$ consider the following
intervals:
\[ A_1=(-\infty, I_{m_1})\cap\Gamma, \quad A_2=[I_{m_1}, S_{m_1}]\cap
\Gamma,\quad A_3=(S_{m_1}, \infty) \cap \Gamma, \] where
$\Gamma=[X_{m_1}-4J, X_{m_1}+4J]$. Then
\begin{multline*}
  \Omega_{n,k, \ell}^C\subset
  \bigcup_{s=1}^3\left\{\sum_{j=m_1}^{\tau-1}|E_{0,\omega}(\Delta_j\,|\,{\cal
      F}_j)|\I_{\{X_j\in A_s\}}>J,\tau\le m_2\right\} \\ \subset
  \bigcup\limits_{s=1}^3\left\{\sum_{j=m_1}^{m_2-1}|E_{0,\omega}(\Delta_j\,|\,{\cal
      F}_j)|\I_{\{X_j\in
      A_s\}}>J\right\}=:\bigcup\limits_{s=1}^3\Omega_{n,k, \ell,s}^C.
\end{multline*}
To estimate 
$P_0(\Omega^C_{n, k, \ell, 3})$ note that to accumulate a drift larger
than $J$ the walk should visit at least $[J/M]$ distinct sites, i.e.
\[ \Omega^C_{n, k, \ell, 3}\subset \{T_{S_{m_1}+[J/M]}-T_{S_{m_1}+1}\leq
m_2-m_1\}.\] Let $\brJ=[J/(2M)]$ and 
$\brl=\ell/8$.  There exists an $m\in\IN$ such that $S_{m_1}+1\le m \brJ
\le (m+1)\brJ\le S_{m_1}+[J/M].$ Using Lemma~\ref{LmERWRange}, we can
find $K>1$ such that $P_0(S_n>K\sqrt{n})<2^{-\brl}$ for all
sufficiently large $n$. Therefore, \[P_0(\Omega^C_{n, k, \ell, 3})\le
2^{-\brl}+ P_0\left(\cup_{m<2^{\brl+3}M K } \Omega^\dagger_{n,m,
    \ell}, S_n\le K\sqrt{n}\right),\]
where
$\Omega^\dagger_{n,m,\ell}=\left\{T_{(m+1)\brJ}-T_{m\brJ}\leq m_2-m_1
\right\}$. Since $m_2-m_1\le C\brJ^2/2^{6\brl}$ for some constant
$C>0$, Lemma~\ref{LmERWRange} implies that there is $\htheta<1$ such
that and all sufficiently large $n$
\[ P_0\left(\cup_{m<2^{\brl+3}K M}\Omega^\dagger_{n,m,\ell}\right)
\leq \sum_{m<2^{\brl+3}KM}P_0\left(\Omega^\dagger_{n,m,\ell}\right)
\leq 2^{\brl+3}KM \htheta^{2^{3\ell}}<C'' 2^{-\ell}. \] 
$P_0(\cup_{k<2^\ell} \Omega_{n,k,\ell, 1}^C)$ is estimated in 
the same way.

We consider now $A_2$, which is a random subinterval of
$[-m_1,m_1]$ and, on  $\Omega_{n,k, \ell,2}^C$, has
length between $J/M$ and $8J$. To estimate $P_0(\Omega_{n,k,
  \ell,2}^C)$ we notice that by Lemma~\ref{LmERWEatAll}, outside of an
event of exponentially small (in $J^{\gamma_2}$) probability,  the number of
cookies that are left in $A_2$ at time $m_1$ does not exceed
$CJ^{\gamma_1}$, where $\gamma_1<1$. Even if the walker consumes all
cookies in that interval, it can not build up a
drift of size $J\gg CJ^{\gamma_1}$ (for $J$ large). With this idea in mind,
we turn now to a formal proof.

As we noted above, on $\Omega_{n,k, \ell,2}^C$, we have  $A_2\in {\cal I}$,
where ${\cal I}$ denotes the set of all intervals of the form
\[[a,b],\  a,b\in\IZ, \ -m_1\le a< b\le m_1,\ J/M\le b-a\le
8J.\] The cardinality of ${\cal I}$ does not exceed $16m_1J\le
Cn^{3/2}$. Therefore,
\begin{equation}
  \label{max}
  P_0(\Omega_{n,k, \ell,2}^C)\le
Cn^{3/2}\max_{A\in {\cal I}}
P_0\left(\sum_{j=m_1}^{m_2-1}|E_{0,\omega}(\Delta_j\,|\,{\cal
    F}_j)|\I_{\{X_j\in I\}}>J,A_2=A\right). 
\end{equation}
By the definition of $A_2$, the walk necessarily crosses the interval
$A_2$ by the time $m_1$. The leftover drift in $A_2$ is at most $M$
times the number of sites in $A_2$, which still have at least one
cookie. Writing $A$ as $[a,b]$, $a,b\in\IZ$, $a<b$, we can estimate
the last probability by \[P_0\left(M\sum_{m=a}^b\I_{\{L_m(T_a\vee
    T_b)<M\}}>J\right)=P_0\left(\sum_{m=a}^b\I_{\{L_m(T_a\vee
    T_b)<M\}}>J/M\right).\] If $a\ge 0$ we can apply
Lemma~\ref{LmERWEatAll} and get that for all sufficiently large $n$
(such that $(8J)^{\gamma_1}\le J/M$)
\begin{multline}
  \label{easy}
P_0\left(\sum_{m=a}^b\I_{\{L_m(T_a\vee T_b)<M\}}>J/M\right)\le
P_0\left(\sum_{m=a}^b\I_{\{L_m(T_b)<M\}}>(b-a)^{\gamma_1}\right)\\\le
\theta^{(b-a)^{\gamma_2}}\le \theta^{(J/M)^{\gamma_2}}.
\end{multline}
The case $b\le 0$ is similar.
Finally, consider the case $a<0<b$. Then
\begin{multline}
  \label{also}  
  P_0\left(\sum_{m=a}^b\I_{\{L_m(T_a\vee T_b)<M\}}>J/M\right)\le
  P_0\left(\sum_{m=a}^0\I_{\{L_m(T_a)<M\}}>J/(2M)\right)\\+P_0\left(\sum_{m=0}^b\I_{\{L_m(
      T_b)<M\}}>J/(2M)\right).
\end{multline}
If $b\le J/(2M)$ then the last term in (\ref{also}) is $0$. But for
$J/(2M)< b\le 8J$ we have that $b^{\gamma_1}\le J/(2M)$ for all
sufficiently large $J$. Lemma~\ref{LmERWEatAll} implies that
\[P_0\left(\sum_{m=0}^b\I_{\{L_m(T_b)<M\}}>J/(2M)\right)\le
P_0\left(\sum_{m=0}^b\I_{\{L_m(T_b)<M\}}>b^{\gamma_1}\right)\le
\theta^{b^{\gamma_2}}\le \theta^{(J/(2M))^{\gamma_2}}.\] The first
term in the right-hand side of (\ref{also}) is estimated in the same
way. We conclude that for some constant $C$ and all sufficiently large
$n$
\[P_0(\cup_{k<2^\ell}\Omega_{n,k, \ell,2}^C)\le Cn^{3/2}2^\ell
\theta^{(J/(2M))^{\gamma_2}}<2^{-\ell}.\]

This completes the proof of \eqref{Holder}
establishing Lemma~\ref{LmERWTight}.
\end{proof}

\section{Boundary case: Proof of Theorem~\ref{ThERWCrit}.}

Let $\delta=1$. For $t\ge 0$ and $n\ge 2$
set \[T^{(n)}(x):=\frac{T_{[nx]}}{n^2/\log^2 n},\quad
X^{(n)}(t):=\frac{X_{[nt]}}{\sqrt{n}\log n},\quad
S^{(n)}(t):=\frac{S_{[nt]}}{\sqrt{n}\log n}.\] Let $\Sigma_j,\ j\ge 0$
be i.i.d.\ positive integer-valued random variables defined in
(\ref{sigmas}). They satisfy (\ref{progeny}) with $\delta=1$ and by
\cite[Chapter 9, Section 6]{GS} for some constant
$a>0$
\begin{equation}
  \label{cflt}
  \frac{\sum_{j=0}^{[n\cdot]}\Sigma_j}{n^2}\overset{J_1}{\Rightarrow}
aH(\cdot)\quad\text{as }n\to\infty,
\end{equation}
where $H:=(H(x)),\ x\ge 0$, is a stable subordinator with
index $1/2$. More precisely,
\begin{equation}
  \label{HTP}
  H(x)=\inf\{t\ge 0:\, B(t)=x\}.
\end{equation}
We shall need the following two lemmas.
\begin{lemma}
  \label{fdd} The finite dimensional distributions of $T^{(n)}$
  converge to those of $cH$, where $c>0$ is a constant and
  $H$ is given by (\ref{HTP}).
\end{lemma}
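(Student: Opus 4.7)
Plan. I start from \eqref{Tn}, which writes $T_{[nx]}=[nx]+2\sum_{k\in\IZ}D_{[nx],k}$; the deterministic piece is $O(n)=o(n^2/\log^2 n)$ and is discarded. By the branching representation recalled in Section~2, $\sum_{k=0}^{[nx]}D_{[nx],k}\overset{\mathrm{d}}{=}\sum_{j=0}^{[nx]}V_j$, which I expect to carry the full limit. The leftover $\sum_{k<0}D_{[nx],k}$ equals in law the total progeny of the same critical branching dynamics continued past time $[nx]$ with the $+1$ immigration switched off (since for $k\le -1$ the number of upcrossings equals rather than exceeds by one the number of downcrossings), and I plan to show it is $o_P(n^2/\log^2 n)$ using the bound $\lesssim V_{[nx]}^2$ together with a stochastic bound $V_{[nx]}=O_P(\sqrt{n})$ derived from \eqref{progeny}; the backtracking estimate announced in the introduction gives an alternative route.

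Next I partition $\sum_{j=0}^{[nx]}V_j$ along the lifetimes \eqref{sigmas}. Setting $N(t):=\max\{m\ge 0:\sigma_m\le t\}$,
\[ \sum_{j=0}^{[nx]}V_j\;=\;\sum_{i=1}^{N([nx])}\Sigma_i\;+\;R_n(x), \]
where $0\le R_n(x)\le \Sigma_{N([nx])+1}$ is the partial final excursion. The i.i.d.\ inter-renewal times $\sigma_i-\sigma_{i-1}$ have tail $\sim C_1/k$ by \eqref{lifetime} with $\delta=1$, hence truncated mean $\IE[\sigma\wedge t]\sim C_1\log t$, and the infinite-mean weak renewal theorem then yields $N(t)(C_1\log t)/t\to 1$ in probability. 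Setting $m_n:=n/(C_1\log n)$, this reads $N([nx])/m_n\to x$.

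To combine the pieces, I apply \eqref{cflt} to $\Sigma_i$: the process $S^{(n)}(y):=m_n^{-2}\sum_{i=1}^{[m_n y]}\Sigma_i$ satisfies $S^{(n)}\overset{J_1}{\Rightarrow}aH$. Since $N([nx])/m_n\to x$ in probability to a \emph{deterministic} limit and $H$ is almost surely continuous at each fixed $x$, Slutsky together with the continuous mapping theorem gives joint convergence $S^{(n)}(N([nx])/m_n)\Rightarrow aH(x)$ at any finite collection $x_1<\dots<x_k$. The same continuity forces the rescaled jump $\Sigma_{N([nx])+1}/m_n^2$ to vanish in probability, so $R_n(x)/(n^2/\log^2 n)\to 0$. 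Dividing the scale $m_n^2$ through by $C_1^2$ converts it to $n^2/\log^2 n$, and the factor $2$ in \eqref{Tn} produces the finite-dimensional convergence $T^{(n)}\Rightarrow cH$ with $c=2a/C_1^2$.

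The main obstacle is the infinite-mean renewal step $N(t)(C_1\log t)/t\to 1$, because the lifetime tail index is exactly $1$, at the boundary of the stable domain, so there is no off-the-shelf weak law. I expect to handle it by truncating $\sigma_i$ at scale $t$ and using a second-moment estimate on $\sum_{i\le m_n x}(\sigma_i\wedge t)$ to rule out a single exceptional inter-arrival dominating the sum or leaving $\sigma_{N(t)+1}-t$ non-negligible. A secondary difficulty is the uniform-in-$x_j$ control of $R_n$ and $\sum_{k<0}D_{[nx_j],k}$, both handled by the one-excursion tail estimate \eqref{progeny} applied to the excursion of $V$ straddling time $[nx_j]$.
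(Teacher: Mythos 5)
Your one-dimensional argument (decompose $\sum_{j\le[nx]}V_j$ along lifetimes, control $N([nx])$ by a weak law, invoke \eqref{cflt}) is the same engine the paper uses, and your treatment of the renewal step differs only cosmetically: the paper derives $\sigma_m/(m\log m)\Rightarrow b^{-1}$ from Theorem~3.7.2 of \cite{Du} rather than from a bespoke truncation/second-moment argument, so the ``main obstacle'' you flag is actually off-the-shelf. Your remark that $\sum_{k<0}D_{[nx],k}$ needs separate handling is also not a deviation in substance; the paper absorbs this term (and the other boundary effects) into the last, possibly incomplete, lifetime via the sandwich $2\sum_{j<N}\Sigma_j\le T_{[nx_k]}-T_{[nx_i]}\le nx_k-nx_i+2\sum_{j\le N}\Sigma_j$.

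The genuine gap is the joint (finite-dimensional) convergence. You write $T_{[nx_j]}\approx 2\sum_{i\le N([nx_j])}\Sigma_i$ with a \emph{single} sequence $(\sigma_m,\Sigma_m)$ and then appeal to \eqref{cflt} plus Slutsky ``at any finite collection $x_1<\dots<x_k$.'' But the branching representation $\sum_{m=0}^{[nx_j]}D_{[nx_j],m}\overset{\mathrm{d}}{=}\sum_{i\le[nx_j]}V_i$ uses the \emph{backward} process started from $[nx_j]$; for different $j$ these are different processes (read off at different stopping times of the same walk), so there is no single $V$, hence no single $(\sigma_m,\Sigma_m)$, for which your identity holds simultaneously at $x_1,\dots,x_k$. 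This is exactly what the paper's observation $D_{[nx_i],m}\le D_{[nx_k],m}$ is for: it lets one sandwich the \emph{increments} $T_{[nx_k]}-T_{[nx_i]}$ between partial sums of $\Sigma_j$ taken from the \emph{one} backward process started at $[nx_k]$, and only then apply \eqref{cflt}. Without that observation (or an equivalent coupling), your proof establishes the convergence of each one-dimensional marginal $T^{(n)}(x)$ but not of the vector $(T^{(n)}(x_1),\dots,T^{(n)}(x_k))$, which is what the lemma asserts. To close the gap, either adopt the paper's increment/monotonicity argument, or explicitly construct a coupling of the $k$ backward processes and show the error terms it introduces are $o_P(n^2/\log^2 n)$.
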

\begin{lemma}
  \label{btrack}
  For every $\epsilon>0$, $T>0$ \[\lim_{n\to\infty}P_0\left(\sup_{0\le
      t\le T}(S^{(n)}(t)-X^{(n)}(t))>\epsilon\right)=0.\]
\end{lemma}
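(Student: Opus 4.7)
The plan is to use the martingale decomposition $X_k = B_k + C_k$ from \eqref{bcd}. Carrying out the computation in the proof of Lemma \ref{Cn} with $\delta = 1$ yields
\[
C_k - R_k \;=\; \sum_{m=I_k}^{S_k}(d_m - 1) \;-\; \sum_{m=I_k}^{S_k}\I_{\{L_m(k) < M\}}\sum_{j=L_m(k)+1}^M(2\omega_m(j) - 1),
\]
which, combined with $R_k = S_k - I_k + 1$ and $X_k = B_k + C_k$, gives the exact identity
\[
S_k - X_k \;=\; -B_k + I_k - 1 + (R_k - C_k).
\]
Bounding the supremum over $k \le [Tn]$ of each of $|B_k|$, $|I_k|$ and $|R_k - C_k|$ by $O_p(\sqrt n)$ therefore implies $\sup_k(S_k - X_k) = O_p(\sqrt n) = o_p(\sqrt n\log n)$, which is the lemma.

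The martingale term is $O_p(\sqrt{[Tn]}) = O_p(\sqrt n)$ by Doob's inequality since $|\Delta B_k| \le 2$. The minimum is nonincreasing in $k$, so $\sup_k|I_k| = |I_{[Tn]}|$, and $\{|I_{[Tn]}| > A\sqrt n\} = \{T_{-A\sqrt n} \le [Tn]\}$. Although the second inequality of Lemma \ref{LmERWRange} is stated only for $\delta \in [0,1)$, the estimate extends to $\delta = 1$ via the same coupling idea: reflecting $X \mapsto -X$ sends $\delta$ to $-\delta$ and reduces the claim to the first inequality at parameter $-1$, which in turn follows from the result at small positive $\delta$ by the coupling argument used at the end of the proof of Lemma \ref{LmERWEatAll}. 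Setting $L = A^2/T$ in the bound gives $P_0(T_{-A\sqrt n}\le[Tn])\le\theta^{A/\sqrt T}$, whence $|I_{[Tn]}| = O_p(\sqrt n)$. The mean-zero sum $\sum_{m=I_k}^{S_k}(d_m-1)$ is a sum over a monotonically expanding random range with at most $R_{[Tn]} = O_p(\sqrt n\log n)$ i.i.d.\ bounded centred summands (the $O_p$ bound on $R_{[Tn]}$ follows from Lemma \ref{fdd} together with $|I_{[Tn]}| = O_p(\sqrt n)$); Doob's $L^2$ maximal inequality applied to the one-sided random walks $b \mapsto \sum_{m=0}^b(d_m - 1)$ and $a \mapsto \sum_{m=a}^0(d_m - 1)$ bounds its supremum by $O_p(R_{[Tn]}^{1/2}) = O_p(n^{1/4}\sqrt{\log n}) = o_p(\sqrt n)$.

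The main obstacle is the leftover-cookie sum, whose absolute value is dominated by $M \cdot N_k$ with $N_k := \#\{m \in [I_k, S_k] : L_m(k) < M\}$. Since Lemma \ref{LmERWEatAll} is vacuous at $\delta = 1$, a new renewal-type input is required. Via the branching-process encoding \eqref{v}, for each $n^* \ge 0$,
\[
N_{T_{n^*}} \;\le\; 2M \max_{0 \le j < M} L_j^V(n^*) \;+\; |I_{T_{n^*}}|,
\]
where the first term bounds the sites in $[0, n^*]$ with fewer than $M$ visits by $T_{n^*}$ and the second trivially bounds the (at most $|I_{T_{n^*}}|$) sites to the left of $0$. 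At $\delta = 1$, by \eqref{lifetime} the lifetime $\sigma$ of $V$ satisfies $P_0^V(\sigma > n) \sim C_1/n$, so $E[\sigma \wedge n] \sim C_1 \log n$; the standard weak law for i.i.d.\ sums with regularly varying tail of index one yields $L_0^V(n) = O_p(n/\log n)$, and for $0 < j < M$ the same order bound follows by decomposing each excursion of $V$ at its visits to $0$ and using independence across excursions. Applied with $n^* = S_{[Tn]} = O_p(\sqrt n\log n)$, and using that $L_j^V(n)$ is nondecreasing in $n$, one obtains $\max_{n^* \le S_{[Tn]}} N_{T_{n^*}} = O_p(\sqrt n)$. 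For arbitrary $k \le [Tn]$, during any single excursion $[T_{n^*-1}, T_{n^*}]$ the counter $N_k$ can only increase by the number of new sites added to the range, whose total across all excursions is at most $|I_{[Tn]}| = O_p(\sqrt n)$; hence $\sup_{k \le [Tn]} N_k = O_p(\sqrt n)$. Combining the three bounds gives $\sup_{k\le[Tn]}(S_k - X_k) = O_p(\sqrt n)$, completing the proof. The renewal estimate for the critical branching process is the main technical step; everything else is an assembly of standard inequalities.
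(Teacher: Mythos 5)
Your overall strategy is genuinely different from the paper's. You try to read off the lemma from the decomposition $X_k=B_k+C_k$ used in the non-boundary case, writing $S_k-X_k=-B_k+I_k-1+(R_k-C_k)$ and showing each piece is $O_p(\sqrt n)$ uniformly in $k\le[Tn]$. The paper instead bounds $\max_{m\le n}(S_m-X_m)$ directly: on the event $\{S_n<K\sqrt n\log n\}$ the whole trajectory up to time $n$ occurs before $T_{\brN}$ with $\brN=[K\sqrt n\log n]$, so the backtracking interval $(X_m,S_m]$ consists of levels that are all downcrossed in $D_{\brN,\cdot}$ and hence is contained in a \emph{single excursion} of the one realization $W_j=D_{\brN,\brN-j}\overset{\mathrm{d}}{=}(V_0,\dots,V_{\brN})$; the max excursion length then has a clean tail bound by \eqref{lifetime}. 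This is a single-copy argument and avoids any uniformity-in-time issue.

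Your Steps for $B_k$, $I_k$ and the drift sum $\sum(d_m-1)$ are fine (and the extension of the second inequality of Lemma~\ref{LmERWRange} to $\delta=1$ via reflection plus the coupling trick from Lemma~\ref{LmERWEatAll} is legitimate). But there is a genuine gap in the leftover-cookie estimate. Your bound $N_{T_{n^*}}\le M\max_{0\le j<M}L_j^V(n^*)+|I_{T_{n^*}}|$ is a \emph{stochastic domination that holds marginally for each fixed $n^*$}: it comes from $(D_{n^*,n^*},\dots,D_{n^*,0})\overset{\mathrm{d}}{=}(V_0,\dots,V_{n^*})$, and for different $n^*$ these are different copies of $V$ in the identification --- there is no single realization of $V$ for which $D_{n^*,m}=V_{n^*-m}$ holds simultaneously for all $n^*$ (indeed $D_{n^*,m}$ is strictly increasing in $n^*$). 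Consequently the step ``using that $L_j^V(n)$ is nondecreasing in $n$, one obtains $\max_{n^*\le S_{[Tn]}}N_{T_{n^*}}=O_p(\sqrt n)$'' does not follow: the monotonicity is a statement about the abstract process $V$, and it does not transfer to the $\omega$-dependent family $\{D_{n^*,\cdot}\}_{n^*}$. In fact the sets $\{m\le n^*:D_{n^*,m}<M\}$ are \emph{increasing}, not decreasing, as $n^*$ decreases with $m$ fixed, so the single copy at level $\brN$ only under-counts. A naive union bound over $n^*\le\brN$ also fails: the one-point bound $P(\max_j L_j^V(\brN)>A\sqrt n)\approx 1-(1-c/\brN)^{A\sqrt n}\approx cA/\log n$ is nowhere near small enough to survive multiplication by $\brN\sim\sqrt n\log n$ choices of $n^*$. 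So some additional idea is needed to make $\sup_{k\le[Tn]}N_k=O_p(\sqrt n)$ rigorous; the claim is true (it follows a posteriori from the paper's lemma combined with your identity), but it is not established by the argument given. The cleanest fix is essentially to abandon the $N_k$-bookkeeping and argue about $S_m-X_m$ directly as the paper does.

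Two small remarks: the paper's proof is considerably shorter precisely because it never decomposes $C_k$; and in your bound the constant should be $M$ rather than $2M$ (since $\sum_{j<M}L_j^V\le M\max_j L_j^V$), though this is immaterial.
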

Theorem~\ref{ThERWCrit} is an easy consequence of these lemmas.
\begin{proof}[Proof of Theorem~\ref{ThERWCrit}]
  Lemma~\ref{fdd} implies that the finite dimensional distributions of
  the process $S^{(n)}$ converge to those of $DB^*$, where $D>0$ is a
  constant. Since the trajectories of $S^{(n)}$ are monotone and the
  limiting process $B^*$ is continuous, we conclude that $S^{(n)}$
  converges weakly to $DB^*$ in the (locally) uniform topology (see
  \cite{A}, Corollary~1.3 and Remark (e) on p.\,588). Finally, by
  Lemma~\ref{btrack} for each $T>0$ \[\sup_{0\le t\le
    T}(S^{(n)}(t)-X^{(n)}(t))\to 0\] in $P_0$ probability. By the
  ``converging together'' theorem (\cite[Theorem 3.1]{Bil}) we
  conclude that $X^{(n)}$ converges weakly to $DB^*$ in the (locally)
  uniform topology, and, thus, in $J_1$.
\end{proof}
\begin{proof}[Proof of Lemma~\ref{fdd}]
  Let $k\in\IN$ and $0=x_0<x_1<\dots<x_k$. We have to show that for any
  $0=t_0<t_1<t_2<\dots<t_k$
  \begin{multline*}
    P_0(T^{(n)}(x_k)-T^{(n)}(x_i)\le
  t_{k-i},\ \forall i=0,1,2,\dots,k-1)\\\to P(T(x_k)-T(x_i)\le t_{k-i},\
  \forall i=0,1,\dots,k-1),\ \text{as }n\to\infty,
  \end{multline*}
where $T(\cdot)=cH(\cdot)$ for some $c>0$.

  At time $T_{[nx_k]}$ consider the structure of the corresponding
  branching process as we look back from $[nx_k]$. Notice that
  $D_{[nx_i],j}\le D_{[nx_k],j}$ for $i\le k$ and all $j$. This simple
  observation will allow us to get bounds on $T_{[nx_i]}$,
  $i=1,2,\dots,k-1$, in terms of the structure of downcrossings at time
  $T_{[nx_k]}$. This means that we can use the same copy of the
  branching process $V$ to draw conclusions about all
  hitting times $T_{[nx_i]}$, $i=1,2,\dots,k$.

  We shall use notation (\ref{sigmas}) and
  let $N^{(0)}=0$, \[N^{(k-i)}=\min\{m\in
  \IN:\,\sigma_m\ge [nx_k]-[nx_i]\},\quad
  i=0,1,2,\dots,k-1.\] 
Since \[2\sum_{j=1}^{N^{(k-i)}-1}\Sigma_j\le T_{[nx_k]}-T_{[nx_i]}\le
nx_k-nx_i+2\sum _{j=1}^{N^{(k-i)}} \Sigma_j,\] we have
  \begin{multline}\label{upper}
    P_0(T^{(n)}(x_k)-T^{(n)}(x_i)\le
  t_{k-i},\ \forall i=0,1,2,\dots,k-1)\\\le P\left(2\sum_{j=1}^{N^{(k-i)}-1}
  \Sigma_j\le n^2t_{k-i}/\log^2 n,\ \forall i=0,1,2,\dots,k-1\right)
  \end{multline}
and 
\begin{multline}\label{lower}
  P_0(T^{(n)}(x_k)-T^{(n)}(x_i)\le t_{k-i},\ \forall
  i=0,1,2,\dots,k-1)\\\ge P\left([nx_k]-[nx_i]+2\sum_{j=1}^{N^{(k-i)}}
  \Sigma_j\le n^2t_{k-i}/\log^2 n,\ \forall i=0,1,2,\dots,k-1\right).
\end{multline}
Next we provide some control on $N^{(k-i)}$, $i=0,1,\dots,k-1$, and on
the maximal lifetime over $[nx_k]$ generations. Theorem~\ref{LmERWLongExc}
and \cite[Theorem 3.7.2]{Du} imply
that $\sigma_n/(n\log n)\Rightarrow b^{-1}$ for some positive constant
$b$. From this it is easily seen
that
\begin{equation}
  \label{num}
  \frac{\min\{m\in\IN:\,\sigma_m>n\}}{nb/\log
    n}\Rightarrow 1 \quad\text{as }n\to\infty.
\end{equation}
Recalling our definition of $N^{(k-i)}$ 
we get that for every $\epsilon,\nu>0$ there is
$n_0$ such that for all $n\ge n_0$ \[P\left(1-\nu\le
  \frac{N^{(k-i)}}{\Bar{N}^{(k-i)}} \le 1+\nu,\
  i=0,\dots,k-1\right)>1-\epsilon,\] where $\Bar{N}^{(k-i)}=b(x_k-x_i)n/\log
n$. In particular, for $C=(1+\nu)bx_k$ we have that \[P\left(N^{(k)}\le
  \frac{Cn}{\log n}\right)>1-\epsilon.\] Define $\lambda_n=(\log
n)^{-1/2}$ (any sequence $\lambda_n$, $n\in\IN$, such that
$\lambda_n\to 0$ and $ \lambda_n\log n \to\infty$ will work) and
notice that by Theorem~\ref{LmERWLongExc} there is $n_1$ such that for
all $n\ge n_1$ \[P\left(\max_{1\le i\le Cn/\log n}(\sigma_i-\sigma_{i-1})\le
n\lambda_n\right)\ge \left(1-\frac{2C_1}{n\lambda_n}\right)^{Cn/\log
  n}>1-\epsilon.\] Thus, on a set $\Omega_\epsilon$ of measure at
least $1-2\epsilon$ for all $n\ge n_0\vee n_1$ the number of lifetimes of the
branching process $V$ covering $[nx_k]-[nx_i]$ generations,
$i=0,1,2,\dots,k-1$, is well controlled and the maximal lifetime over
$[nx_k]$ generations does not exceed $n\lambda_n$. In particular, on
$\Omega_\epsilon$, the number of lifetimes in any interval
$([nx_i],[nx_{i+1}]),\ i=0,1,\dots,k-1$, goes to infinity as $n\to\infty$. 

Finally, on $\Omega_\epsilon$ we get from (\ref{upper}) and
(\ref{cflt}) that
\begin{multline*}
  P_0(T^{(n)}(x_k)-T^{(n)}(x_i)\le t_{k-i},\ \forall
  i=0,1,2,\dots,k-1)\\\le
  P\left(2\sum_{j=1}^{(1-\nu)\Bar{N}^{(k-i)}-1} \Sigma_j\le
    n^2t_{k-i}/\log^2 n,\ \forall i=0,1,2,\dots,k-1\right)\\=
  P\left(\frac{\sum_{j=1}^{(1-\nu)\Bar{N}^{(k-i)}-1}
      \Sigma_j}{((1-\nu)n/\log n)^2}\le \frac{t_{k-i}}{2(1-\nu)^2},\
    \forall i=0,1,2,\dots,k-1\right)\\\to P(aH(b(x_k-x_i))\le
  (1-\nu)^{-2}t_{k-i}/2\ \forall
  i=0,1,2,\dots,k-1)\\=P(2ab^2(H(x_k)-H(x_i))\le
  t_{k-i}(1-\nu)^{-2})\ \forall i=0,1,2,\dots,k-1).
\end{multline*}
The lower bound is shown starting from (\ref{lower}) in exactly the
same way.  Letting $\nu\to 0$ and then $\epsilon\to 0$ we obtain the
statement of the lemma with $T(\cdot)=2ab^2H(\cdot)=:cH(\cdot)$.
\end{proof}

\begin{proof}[Proof of Lemma~\ref{btrack}]
  Without loss of generality we can consider $t\in[0,1]$. Fix some
  $\nu>0$.  We have
\begin{multline}\label{btrack1}
  P_0\left(\sup_{0\le t\le 1}(S^{(n)}(t)-X^{(n)}(t))>\epsilon\right)
  \le P_0(S_n\ge K\sqrt{n}\ln n)+\\P_0\left(\max_{0\le m\le
      n}(S_m-X_m)>\epsilon \sqrt{n}\ln n, S_n< K\sqrt{n}\ln
    n)\right).
\end{multline}
By Lemma~\ref{fdd} we can find $K>0$ such that for all large $n$
\[P_0(S_n\ge K\sqrt{n}\ln n)\le P_0(T_{[K\sqrt{n}\ln n]}\le n)<\nu.\] To
estimate the last term in (\ref{btrack1}) we shall use properties of the branching
process $V$. Let $N=\min\{m\in\IN:\,\sigma_m>
K\sqrt{n}\ln n\}$. Then the last term in (\ref{btrack1}) is bounded by
\begin{multline*}
  P_0^V\left(\max_{i\le N} (\sigma_i-\sigma_{i-1})\ge
    \epsilon\sqrt{n}\ln n\right)\le \\ P_0^V(N>C\sqrt{n})+P_0^V\left(\max_{i\le
      C\sqrt{n}} (\sigma_i-\sigma_{i-1})\ge \epsilon\sqrt{n}\ln n,
    N\le C\sqrt{n}\right)\overset{(\ref{num})}{\le}\\ \nu
  +P_0^V\left(\max_{i\le C\sqrt{n}} (\sigma_i-\sigma_{i-1})\ge \epsilon\sqrt{n}\ln n\right),
\end{multline*}
for some large $C$ and all sufficiently large $n$. Finally, from
(\ref{lifetime}) we conclude that for all large enough $n$ the last
probability does not exceed \[1-\left(1-\frac{2C_1}{\epsilon\sqrt{n}\ln
    n}\right)^{[C\sqrt{n}]}<\nu.\] This completes the proof.
\end{proof}

\noindent \textbf{Acknowledgments.}  The authors are grateful to the
Fields Institute for Research in Mathematical Sciences for support and
hospitality. D.~Dolgopyat was partially supported by the NSF grant DMS
0854982. E.~Kosygina was partially supported by a Collaboration Grant
for Mathematicians (Simons Foundation) and the PSC CUNY Award \#
64603-00 42. The authors also thank the anonymous referee for careful
reading of the paper and remarks which helped to improve the exposition.

\bigskip

{\sc \small
\begin{tabular}{ll}
Department of Mathematics& \hspace*{30mm}Department of Mathematics\\
University of Maryland& \hspace*{30mm}Baruch College, Box B6-230\\
4417 Mathematics Building&\hspace*{30mm}One Bernard Baruch Way\\
College Park, MD 20742, USA &\hspace*{30mm}New York, NY 10010, USA\\
{\verb+dmitry@math.umd.edu+}& \hspace*{30mm}{\verb+elena.kosygina@baruch.cuny.edu+}
\end{tabular}\vspace*{2mm}

}


\begin{thebibliography}{999}
\bibitem{A} {\sc D.\ Aldous}\ (1989) Stopping times and tightness, II.
  \textit{Ann.\ Probab.} {\bf 17},
  no.\ 2. 586--595.

\bibitem{BS08a}{\sc A.-L.\ Basdevant, A.\ Singh}\ (2008).
On the speed of a cookie random walk. 
\textit{Probab.\ Theory Related Fields} {\bf 141}, no.\ 3-4, 625--645.

\bibitem{BS08b}{\sc A.-L.\ Basdevant, A.\ Singh}\
  (2008).  Rate of growth of a transient cookie random walk.
  \textit{Electron.\ J.\ Probab.}  {\bf 13}, no.\ 26, 811--851.

\bibitem{BW03}{\sc I.\ Benjamini, D.B.\ Wilson}\ (2003).
Excited random walk. \textit{Electron.\ Comm.\ Probab.} {\bf 8}, 86--92

\bibitem{BR07}{\sc J.\ B\'erard, A.\ Ram\'irez}\ (2007).
 Central limit theorem for the excited random walk in dimension $d \geq 2$. 
\textit{Elect.\ Comm.\ in Probab.\ }  \textbf{12}, no.\ 30, 303--314

\bibitem{Bil} {\sc P.\ Billingsley}\ (1999). Convergence of
  probability measures. Second edition. John Wiley \& Sons, Inc., New
  York, x+277 pp. 

\bibitem{CD} {\sc L.\ Chaumont, R.\ A.\ Doney}\ (1999). Pathwise
    uniqueness for perturbed versions of Brownian motion and reflected
    Brownian motion, \textit{Probab.\ Theory Related Fields} {\bf
      113}, no.\ 4, 519--534.

\bibitem{Da} {\sc B. Davis} (1996).
Weak limits of perturbed random walks and the equation 
$Y_t=B_t+\alpha\sup\{Y_s\colon\ s\leq t\}+\beta\inf\{Y_s\colon\ s\leq t\},$
\textit{Ann. Probab.} {\bf 24} 2007--2023. 

\bibitem{Do11} {\sc D.\ Dolgopyat} (2011). Central limit theorem
for excited random walk in the recurrent regime. \textit{ALEA,
Lat.\ Am.\ J.\ Prob.\ Mat.\ Stat.\ }\textbf{8}, 259--268.

\bibitem{Du} {\sc R.\ Durrett} (2010). Probability: theory and
    examples. Fourth edition. Cambridge Series in Statistical and
    Probabilistic Mathematics. Cambridge University Press, Cambridge,
    x+428 pp.

  \bibitem{GS} {\sc I.\ I.\ Gikhman, A.\ V.\ Skorokhod}
    (1965). Introduction to the theory of stochastic processes.
    ``Nauka'', Moscow, 656 pp.. English translation: (1969) W.\ B.\
    Saunders Co., Philadelphia, Pa.-London-Toronto, Ont.\ xiii+516 pp.

  \bibitem{GS1} {\sc I.\ I.\ Gikhman, A.\ V.\ Skorokhod}
    (1974). The theory of stochastic processes. I.
Translated from the Russian by S. Kotz.
Die Grundlehren der mathematischen Wissenschaften, Band 210.
Springer-Verlag, New York-Heidelberg. viii+570 pp.

\bibitem{HH} {\sc  P.\ Hall,  C.\ C.\ Heyde} (1980).
Martingale limit theory and its application.
Academic Press, Inc. New York-London, xii+308 pp. 


\bibitem{KZ} {\sc E.\ Kosygina, M.\ Zerner}\ (2008).  Positively and
  negatively excited random walks on integers.  with branching
  processes, \textit{Electron.\ J.\ Probab.}\ \textbf{13}, no. 64, 1952--1979.

\bibitem{KM} {\sc E.\ Kosygina, T.\ Mountford}\ (2011).
Limit laws of transient excited random walks on
integers. \textit{Ann.\ Inst.\ H.\ Poincar\'e Probab.\ Statist.}\ \textbf{
47}, no.\ 2, 575--600.

\bibitem{MPRV} {\sc M.\ Menshikov, S.\ Popov, A.\ Ramirez, M.\
    Vachkovskaya}\ (2011).  On a general many-dimensional excited
  random walk. \textit{Ann.\ Probab.} to appear.

\bibitem{MPV06}{\sc T.\ Mountford, L.P.R.\ Pimentel} and {\sc
    G.\ Valle}\ (2006).  On the speed of the one-dimensional excited
  random walk in the transient regime.  \textit{Alea} \textbf{2},
  279--296.

\bibitem{PW} {\sc M.\ Perman, W.\ Werner}\ (1997).  Perturbed Brownian
  motions, \textit{Prob.\ Theory Related Fields}, {\bf 108}, no. 3, 357–-383.

\bibitem{Pe}{\sc J.\ Peterson}\ (2012). Large deviations and slowdown
  asymptotics for one-dimensional excited random walks.
  arXiv:1201.0318.

\bibitem{RR}{\sc R.\ Rastegar, A.\ Roiterstein}\ (2011). Maximum
  occupation time of a transient excited random walk on
  $\IZ$. arXiv:1111.1254.

\bibitem{T} B. Toth (1996).
Generalized Ray-Knight theory and limit theorems for self-interacting random walks on $\mathbb{Z}^1.$
\textit{Ann. Probab.} {\bf 24} 1324--1367. 

\bibitem{Ze05}{\sc M.P.W.\ Zerner}\ (2005).
Multi-excited random walks on integers. 
 \textit{Probab.\ Theory Related Fields}  \textbf{133}, 98 -- 122

\bibitem{Ze06}{\sc M.P.W.\ Zerner}\ (2006).
Recurrence and transience of excited random walks on $\mathbb{Z}^d$ and strips.
\textit{Electron.\ Comm.\ Probab.}\  \textbf{11}, no.\ 12, 118 -- 128

\end{thebibliography}
\end{document}